\documentclass[11pt]{article}
\usepackage{graphicx}
\usepackage{amsthm, amsmath, amssymb, tikz, bm}
\usepackage{mathtools}
\usepackage{xifthen}
\usepackage{comment}

\usepackage[margin=1in]{geometry} 

\usepackage[shortlabels]{enumitem}
\usepackage{todonotes}
\usepackage[colorlinks=true,
linkcolor=blue,citecolor=blue,
urlcolor=blue]{hyperref}
\usepackage[normalem]{ulem}

\usetikzlibrary{calc,shapes, backgrounds}

\newtheorem{theorem}{Theorem}[section]
\newtheorem{corollary}[theorem]{Corollary}

\newtheorem{lemma}[theorem]{Lemma}
\newtheorem{claim}{Claim}

\newtheorem{conjecture}{Conjecture}

\newcommand{\cH}{\mathcal{H}}

\makeatletter
               {\list{}{\leftmargin=0pt 
                        \labelwidth\z@ \itemindent-\leftmargin
                        }}%
               {\endlist}
\makeatother

\title{On the 3-colorability of triangle-free and fork-free graphs}

\author{
Joshua Schroeder \thanks{Georgia Institute of Technology, Atlanta, GA 30332,
({\tt joschroed@gmail.com}).}
\and
Zhiyu Wang \thanks{Louisiana State University, Baton Rouge, LA 70803,
({\tt zhiyuw@lsu.edu}). This author was supported in part by LA Board of Regents grant LEQSF(2024-27)-RD-A-16.}
\and
Xingxing Yu \thanks{Georgia Institute of Technology, Atlanta, GA 30332,
({\tt yu@math.gatech.edu}). This author was partially supported by NSF Grant DMS-1954134 and NSF Grant DMS-2348702.}
}

\begin{document}

\maketitle

\begin{abstract}
A graph $G$ is said to satisfy the Vizing bound if $\chi(G)\leq \omega(G)+1$,  where $\chi(G)$ and $\omega(G)$ denote  the  chromatic  number  and  clique  number of $G$, respectively. It was conjectured by Randerath in 1998 that if $G$ is a triangle-free and fork-free graph, where the fork (also known as trident) is obtained from $K_{1,4}$ by subdividing two edges, then $G$ satisfies the Vizing bound.  In this paper, we confirm this conjecture. 
\end{abstract}

\section{Introduction}

The \emph{chromatic number} $\chi(G)$ of a graph $G$ is the least number of colors needed to properly color the vertices of $G$. 
Given a family of graphs $\cH$, a graph $G$ is said to be \emph{$\mathcal{H}$-free} if $G$ has no induced subgraph isomorphic to $H$ for all $H\in \cH$. 
Tutte \cite{De47, De54} showed that for any $n$, there exists a triangle-free graph with chromatic number at least $n$. (See \cite{Mycielski1955} for another construction, and see \cite{Scott-Seymour2020} for more constructions.) Hence in general there exists no function of $\omega(G)$ that gives an upper bound on $\chi(G)$ for all graphs $G$.
On the other hand, if for some class of graphs there is a function $f$ such that $\chi(G) \leq f(\omega(G))$ for every graph $G$ in this class, we say that this class is \emph{$\chi$-bounded}, with \emph{binding function} $f$. One well-known example is the class of perfect graphs, which has binding function $f(x) = x$, see \cite{crst} for a characterization of perfect graphs. Gy\'arf\'as \cite{Gyarfas1975} and Sumner \cite{Sumner} independently conjectured that for every tree $T$, the class of $T$-free graphs is $\chi$-bounded. This conjecture has been confirmed for some special trees $T$ (see, for example, \cite{CSS2019, Gyarfas1975, GST1980, Kierstead-Penrice1994, Kierstead-Zhu2004, Scott1997, Scott-Seymour2020}), but remains open in general.

We are interested in the following question: For which trees $T$ is the family of $T$-free graphs $\chi$-bounded with binding function $f(x) = x+1$? This question is connected to Vizing's theorem on chromatic index in the sense that for any graph $G$ (with $G\not= K_3$), $\chi(L(G))=\chi'(G) \le  \Delta(G)+1=\omega(L(G))+1$, where $\chi'(G)$ denotes the chromatic index of $G$ and $L(G)$ denotes the line graph of $G$. We see that the class of line graphs is $\chi$-bounded with binding function $f(x) = x+1$. Motivated by this, we say that a class of graphs satisfies the \emph{Vizing bound} if it is $\chi$-bounded with binding function $f(x) = x+1$. 

A pair $\{A, B\}$ of connected graphs is said to be a \emph{Vizing pair} if the class of $\{A, B\}$-free graphs satisfies the Vizing bound. A Vizing pair $\{A,B\}$ is called a \textit{good Vizing pair} if 
neither the condition “$A$-free” nor the condition “$B$-free” is redundant. The study on the characterization of Vizing pairs was initiated by Randerath \cite{Randerath} in 1998. A result of Kierstead \cite{Kierstead1984} in 1984 implied that $\{K_5^-, K_{1,3}\}$ is a good Vizing pair, where $K_5^-$ is obtained from $K_5$ by removing one edge. This generalizes Vizing's theorem, as line graphs are characterized \cite{Beineke1968} as $\cH$-free graphs for some family $\cH$ with $|\cH|= 9$ and $\{K_5^-, K_{1,3}\} \subseteq \cH$.
A good Vizing pair $\{A, B\}$ is said to be \emph{saturated} if for every good
Vizing pair $\{A', B'\}$ with $A \subseteq A'$ and $B \subseteq B'$, we have $A \cong A'$ and $B \cong B'$. 
It is easy to see that if $\{A,B\}$ is a good Vizing pair, then $\{A',B'\}$ is also a Vizing pair for any $A'\subseteq A$ and $B'\subseteq B$. Thus, to characterize all the good Vizing pairs, it suffices to consider the saturated Vizing pairs.

A result by Erd\H{o}s \cite{erdos} implies that for any pair of graphs $\{A,B\}$, in order for $\{A,B\}$-free graphs to have bounded chromatic number, at least one of $A$, $B$ must be acyclic. Therefore, in any saturated Vizing pair $\{A, B\}$, at least one of $A$ or $B$ must be a tree. Randerath \cite{Randerath} studied the first nontrivial case for saturated Vizing pairs, namely pairs of graphs $\{A, B\}$ such that every $\{A, B\}$-free graph $G$ has $\chi(G) \leq 3$. For any such $G$, clearly $K_4 \nsubseteq G$; so, without loss of generality, $A$ must be an induced subgraph of $K_4$, and the only nontrivial cases are $A = K_3$ and $A = K_4$. In either case, $B$ must be a tree. Randerath \cite{Randerath} showed that for $A = K_4$, $\{K_4, P_4\}$ is the only good pair that is saturated. To characterize the good Vizing pairs of the form $\{K_3,B\}$, Randerath \cite{Randerath} proved that $B$ must either be the $H$ graph (the tree with two vertices of degree $3$ and four vertices of degree $1$, see Figure \ref{fig:forbidden_graphs}), or $B$ must be an induced subgraph of the \textit{fork}, the graph obtained from $K_{1,4}$ by subdividing two edges, see Figure \ref{fig:forbidden_graphs}. Randerath further showed that $\{K_3,B\}$ is a good Vizing pair for $B \in \{ H, E, \mbox{ \text{cross}}\}$, where the $E$ graph is obtained from $K_{1,3}$ by subdividing two distinct edges, and the \emph{cross} graph is obtained from $K_{1,4}$ by subdividing one edge, see Figure \ref{fig:forbidden_graphs}.
Observe that both the $E$ graph and the cross graph are induced subgraphs of the fork. 

The only remaining case to complete the characterization of saturated good Vizing pairs ($K_3, B$) is the case when $B$ is the \text{fork} graph (also called a \textit{trident}). Randerath \cite{Randerath} conjectured in 1998 that $\{K_3, \text{fork}\}$ is a good Vizing pair, which is also known as the \textit{trident}-conjecture (see \cite{Schiermeyer-Randerath2019}).

\begin{conjecture}[Randerath \cite{Randerath, Randerath_the_second}]\label{conj:trident}
For any \{$K_3$, fork\}-free graph, $\chi(G) \leq \omega(G)+1 \leq 3$.
\end{conjecture}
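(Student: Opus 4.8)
The plan is to prove the contrapositive in the following form: every triangle-free, fork-free graph is $3$-colorable. The inequality $\omega(G)+1\le 3$ is immediate, since a triangle-free graph with an edge has $\omega(G)=2$ and an edgeless graph has $\omega(G)=1$, so the entire content is the bound $\chi(G)\le 3$. Suppose for contradiction that some triangle-free, fork-free graph is not $3$-colorable, and among all such graphs let $G$ have the fewest vertices. Then $G$ is vertex-critical with $\chi(G)=4$, hence $2$-connected with $\delta(G)\ge 3$, and every proper induced subgraph is $3$-colorable. Since $G$ is triangle-free, $N(v)$ is an independent set for every vertex $v$. Finally, Brooks' theorem lets me assume $\Delta(G)\ge 4$: a connected triangle-free graph with maximum degree at most $3$ is $3$-colorable, because the relevant Brooks exceptions are $K_4$, which contains a triangle, and the odd cycles, which are $3$-colorable.

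First I would fix a vertex $v$ of maximum degree and organize $G$ into breadth-first layers $L_0=\{v\}$, $L_1=N(v)$, $L_2,L_3,\dots$. Triangle-freeness makes $L_1$ independent, and since $\delta(G)\ge 3$ and $L_1$ is independent, every vertex of $L_1$ has at least two neighbors in $L_2$. The heart of the argument is a sequence of structural lemmas extracting consequences of the absence of forks centered at $v$ (and, after re-centering, at the other vertices of degree $\ge 4$). Concretely, an induced fork centered at $v$ may be assembled from any two neighbors $m_1,m_2\in L_1$ playing the subdivided arms, private second-layer neighbors $b_1\in N(m_1)\cap L_2$ and $b_2\in N(m_2)\cap L_2$, and two further leaves $a_1,a_2\in L_1$, subject to the defining non-adjacencies $b_1b_2,\,m_1b_2,\,m_2b_1,\,a_ib_j\notin E(G)$. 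Forbidding all such configurations forces the vertices of $L_2$ to dominate $L_1$ very strongly and forbids most independent "reaching-out" behavior in $L_2$. I expect these lemmas to pin down $G[L_1\cup L_2]$ as an essentially bipartite, densely interconnected gadget and to severely restrict — or eliminate — the deeper layers, confining $G$ to a tightly structured ball around $v$.

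The argument proceeds as a running dichotomy: at each step either the forced adjacencies realize an induced fork, immediately contradicting fork-freeness, or they push the structure toward something manifestly $3$-colorable. To close the second branch I would invoke criticality. For each vertex $x$, minimality yields a proper $3$-coloring of $G-x$, and criticality forces $N(x)$ to meet all three color classes in every such coloring, since otherwise the coloring extends to $x$. Using the bipartite-like description of $G[L_1\cup L_2]$ together with Kempe-chain recoloring, I would show that some color can always be freed at a suitably chosen vertex — equivalently, that the pinned-down graph admits a proper $3$-coloring outright — contradicting $\chi(G)=4$. Either branch of the dichotomy therefore destroys the counterexample.

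The main obstacle, I expect, lies in the middle step rather than in the final recoloring. Fork-freeness is a purely local constraint, yet triangle-free $4$-critical graphs (such as the Mycielskian-type constructions) can be large and genuinely varied, so the difficulty is to leverage the local absence of forks globally enough to force $3$-colorability. The delicate configurations are those rich in induced $C_4$'s and $C_5$'s, where two candidate arms share or collide with their second-layer neighbors and the non-adjacency conditions defining an induced fork fail by a single edge; excluding these requires carefully tracking adjacencies across $L_1$, $L_2$, and $L_3$ and repeatedly re-centering the fork at different high-degree vertices to extract additional structure. Taming this interaction between the layers is where the real work of the proof will be.
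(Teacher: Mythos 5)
Your proposal sets up the standard framework correctly (minimum counterexample, criticality, $N(x)$ meeting all three colors in any $3$-coloring of $G-x$, independence of neighborhoods), but the entire substance of the proof is deferred: the middle step, which you yourself flag as ``where the real work of the proof will be,'' is described only as an expectation that fork-freeness will ``pin down $G[L_1\cup L_2]$ as an essentially bipartite, densely interconnected gadget'' and ``severely restrict --- or eliminate --- the deeper layers.'' No lemma is stated, let alone proved, that actually forces this structure, and there is no argument that the BFS ball around a maximum-degree vertex is the right object to control. As written, this is a research plan rather than a proof, and it cannot be checked for correctness because the load-bearing claims are absent.

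Beyond the missing content, the organizing principle you choose differs from the one the paper uses, and the difference matters. Since a triangle-free, fork-free graph with no $5$-cycle has odd girth at least $7$ (or is bipartite) and is then $3$-colorable by the earlier result of Fan, Xu, Ye, and Yu, the essential difficulty lives entirely in graphs containing $5$-cycles; accordingly, the paper's proof is built around a $5$-cycle $C$ chosen to minimize $|N^{\ge 2}(C)|$ (a ``good'' $5$-cycle), first establishing $3$-connectivity (not merely $2$-connectivity --- several steps genuinely need the absence of $2$-cuts) and a key lemma forbidding two independent length-$2$ paths from distinct vertices of $N^2(C)$ to a common vertex of $C$, then showing every vertex outside $C$ has at most one neighbor on $C$, and finally splitting into cases by the number of degree-$3$ vertices of $G$ on $C$. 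Your BFS-from-a-high-degree-vertex layering does not obviously interact with the $5$-cycles that are the true source of difficulty, and nothing in your sketch indicates how the ``rich in induced $C_4$'s and $C_5$'s'' configurations you correctly identify as delicate would actually be excluded. To turn this into a proof you would need to either carry out the layer analysis in full (and there is no evidence it closes) or reorganize around $5$-cycles as the paper does.
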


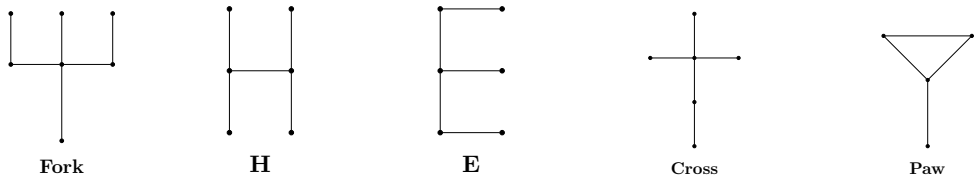
\begin{figure}[htb]
\hbox to \hsize{
	\hspace*{1cm}
	\resizebox{1.5cm}{!}{\begin{tikzpicture}[scale=1, Wvertex/.style={circle, draw=black, fill=white, scale=2}, bvertex/.style={circle, draw=black, fill=black, scale=0.2},rvertex/.style={circle, draw=red, fill=red, scale=0.2}]

\node [bvertex] (v1) at (0,0) {};
\node [bvertex] (v2) at (1,0) {};
\node [bvertex] (v3) at (1,1) {};
\node [bvertex] (v4) at (-1,0) {};
\node [bvertex] (v5) at (-1,1) {};
\node [bvertex] (v6) at (0,1) {};
\node [bvertex] (v7) at (0,-1.5) {};

\draw (v1) -- (v2) -- (v3);
\draw (v1) -- (v4) -- (v5);
\draw (v1) -- (v6);
\draw (v1) -- (v7);

\node[] at (0,-2) {\textbf{Fork}};

\end{tikzpicture}	}%
	\hspace*{1cm}
	\resizebox{1cm}{!}{\begin{tikzpicture}[scale=1, Wvertex/.style={circle, draw=black, fill=white, scale=2}, bvertex/.style={circle, draw=black, fill=black, scale=0.2},rvertex/.style={circle, draw=red, fill=red, scale=0.2}]

\node [bvertex] (v1) at (0.5,1) {};
\node [bvertex] (v2) at (0.5,0) {};
\node [bvertex] (v3) at (0.5,-1) {};
\node [bvertex] (v4) at (-0.5,1) {};
\node [bvertex] (v5) at (-0.5,0) {};
\node [bvertex] (v6) at (-0.5,-1) {};

\draw (v1) -- (v2) -- (v3);
\draw (v4) -- (v5) -- (v6);
\draw (v5) -- (v2);

\node[] at (0,-1.5) {\textbf{H}};
\end{tikzpicture}	}
	\hspace*{1cm}
	\resizebox{1cm}{!}{\begin{tikzpicture}[scale=1, Wvertex/.style={circle, draw=black, fill=white, scale=2}, bvertex/.style={circle, draw=black, fill=black, scale=0.2},rvertex/.style={circle, draw=red, fill=red, scale=0.2}]

\node [bvertex] (v1) at (0.5,1) {};
\node [bvertex] (v2) at (0.5,0) {};
\node [bvertex] (v3) at (0.5,-1) {};
\node [bvertex] (v4) at (-0.5,1) {};
\node [bvertex] (v5) at (-0.5,0) {};
\node [bvertex] (v6) at (-0.5,-1) {};

\draw (v1) -- (v4);
\draw (v2) -- (v5);
\draw (v3) -- (v6);
\draw (v4) -- (v5) -- (v6);

\node[] at (0,-1.5) {\textbf{E}};

\end{tikzpicture}	}
	\hspace*{1cm}
	\resizebox{1.3cm}{!}{\begin{tikzpicture}[scale=1, Wvertex/.style={circle, draw=black, fill=white, scale=2}, bvertex/.style={circle, draw=black, fill=black, scale=0.2},rvertex/.style={circle, draw=red, fill=red, scale=0.2}]

\node [bvertex] (v1) at (0,1) {};
\node [bvertex] (v2) at (0,0) {};
\node [bvertex] (v3) at (0,-1) {};
\node [bvertex] (v4) at (0,-2) {};
\node [bvertex] (v5) at (-1,0) {};
\node [bvertex] (v6) at (1,0) {};

\draw (v1) -- (v2) -- (v3) -- (v4);
\draw (v2) -- (v5);
\draw (v2) -- (v6);

\node[] at (0,-2.5) {\textbf{Cross}};

\end{tikzpicture}	}
	\hspace*{1cm}
	\resizebox{1.3cm}{!}{\begin{tikzpicture}[scale=1, Wvertex/.style={circle, draw=black, fill=white, scale=2}, bvertex/.style={circle, draw=black, fill=black, scale=0.2},rvertex/.style={circle, draw=red, fill=red, scale=0.2}]

\node [bvertex] (v1) at (0,0) {};
\node [bvertex] (v2) at (-1,1) {};
\node [bvertex] (v3) at (1,1) {};
\node [bvertex] (v4) at (0,-1.5) {};

\node[] at (0,-2) {\textbf{Paw}};

\draw (v4) -- (v1) -- (v3) -- (v2) -- (v1);

\end{tikzpicture}	}
	\hspace*{1cm}
}
    \caption{Forbidden graphs}
    \label{fig:forbidden_graphs}
\end{figure}

In 2014, Fan, Xu, Ye, and Yu \cite{yeyu} made a significant progress on this conjecture by showing that the conjecture holds for $C_5$-free graphs. 

\begin{theorem} [Fan, Xu, Ye, and Yu \label{thm:FXYY2014}\cite{yeyu}]\label{thm:grith7}
Let $G$ be a fork-free graph with odd girth at least $7$, then $\chi(G)\leq 3$.
\end{theorem}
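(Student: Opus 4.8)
The plan is to argue by contradiction from a vertex-critical counterexample, exploiting the fact that for triangle-free $C_5$-free graphs the balls of radius two are bipartite. Suppose the theorem fails. Since both ``fork-free'' and ``odd girth at least $7$'' are preserved under taking induced subgraphs, I may choose a counterexample $G$ that is vertex-critical with $\chi(G)=4$; such a $G$ is $2$-connected and satisfies $\delta(G)\ge 3$. The goal is then to show that no such $G$ can exist.

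The engine of the argument is the following local structure lemma, which uses only that $G$ is triangle-free and $C_5$-free (both implied by odd girth at least $7$): for every vertex $v$, the second neighborhood $N^2(v)$ is an independent set. Indeed, an edge $qq'$ with $q,q'\in N^2(v)$ would, together with vertices $a\in N(v)\cap N(q)$ and $b\in N(v)\cap N(q')$, produce either a triangle (if $a=b$, or if $a\sim q'$ or $b\sim q$) or the $5$-cycle $v\,a\,q\,q'\,b\,v$; both are excluded. Consequently $G[N[v]\cup N^2(v)]$ is bipartite, with parts $\{v\}\cup N^2(v)$ and $N(v)$. In particular, if $G$ has radius at most $2$ then $G$ itself is bipartite and $\chi(G)\le 2$, a contradiction; so I may assume that $G$ has radius at least $3$.

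Next I would feed in fork-freeness as a constraint on the bipartite ``attachment'' between $N(v)$ and $N^2(v)$. Reading off the fork with center $v$: if $v$ has four neighbors $\ell_1,\ell_2,p_1,p_2$ and there are distinct $q_1\in N(p_1)\cap N^2(v)$ and $q_2\in N(p_2)\cap N^2(v)$ with $q_1\not\sim\ell_1,\ell_2,p_2$ and $q_2\not\sim\ell_1,\ell_2,p_1$, then---since $q_1\not\sim q_2$ holds automatically by the lemma---these seven vertices induce a fork. Hence fork-freeness forbids this configuration, so whenever $\deg(v)\ge 4$, any two second-neighbors reached through distinct neighbors of $v$ must between them be adjacent to all but at most one vertex of $N(v)$. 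This forces the neighborhoods of high-degree vertices to be highly structured (only a bounded number of ``types'' of second-neighbor), and I would use it either to exhibit an independent set $I$ meeting every odd cycle so that $G-I$ is bipartite, or to locate a vertex whose neighborhood coloring can always be rearranged, contradicting $4$-criticality.

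The main obstacle is exactly this last transfer from local to global. Fork-freeness only bites at vertices of degree at least $4$ and only rules out specific induced patterns, whereas the allowed even structure (the $C_4$'s and $C_6$'s arising from heavily shared second-neighbors) can be dense; so the crux is to control how the bipartite balls around different vertices overlap and to assemble them into a single $3$-coloring. I expect the argument to split into cases according to the maximum degree and to how second-neighbors are shared among the neighbors of a fixed vertex, with the most symmetric, densely-shared configurations being the delicate ones. Handling these, rather than establishing the local lemma, is where I anticipate the real work will lie.
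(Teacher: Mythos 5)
This theorem is not proved in the paper at all --- it is quoted from Fan, Xu, Ye, and Yu \cite{yeyu} --- so there is no in-paper proof to compare against; your proposal must stand on its own, and it does not. Your preliminary observations are correct: in a graph of odd girth at least $7$ the set $N^2(v)$ is independent for every $v$ (an edge $qq'$ in $N^2(v)$ together with $a\in N(v)\cap N(q)$ and $b\in N(v)\cap N(q')$ yields a closed odd walk of length at most $5$), hence $G[N[v]\cup N^2(v)]$ is bipartite and a $4$-chromatic vertex-critical counterexample has radius at least $3$; and your reading of the fork at a vertex of degree at least $4$ is a legitimate induced-subgraph constraint. But everything after that is a statement of intent, not a proof. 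You say you would ``either exhibit an independent set $I$ meeting every odd cycle \ldots or locate a vertex whose neighborhood coloring can always be rearranged,'' and you then explicitly identify the transfer from these local facts to a global contradiction as the part you have not done. That transfer \emph{is} the theorem: the local bipartiteness of second neighborhoods and the degree-$\ge 4$ fork constraint are cheap consequences of the hypotheses, and neither of your two proposed endgames is carried out or even reduced to a concrete finite case analysis. In particular, nothing in the proposal rules out a $4$-critical, radius-$3$, maximum-degree-$3$ graph, where your fork constraint (which ``only bites at vertices of degree at least $4$,'' as you note) says nothing at all; fork-freeness for cubic graphs still forbids induced paths-plus-leaves configurations, but you have not extracted any usable consequence from it in that regime.

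For calibration: the source \cite{yeyu} devotes a long argument to this statement, and the present paper's proof of the stronger girth-$5$ case occupies three sections of intertwined lemmas about $5$-cycles, independent paths into $N^2(C)$, and minimal counterexamples. So the gap you have flagged yourself is not a routine finishing step; it is the entire content of the result, and the proposal as written does not constitute a proof.
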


In this paper, we confirm Conjecture \ref{conj:trident} in its full generality.

\begin{theorem}\label{thm:main_theorem}
For any $\{K_3, \text{fork}\}$-free graph $G$, $\chi(G) \leq 3$.
\end{theorem}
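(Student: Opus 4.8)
The plan is to argue by contradiction using a minimal counterexample together with the already-known resolution of the $C_5$-free case. Suppose the statement fails, and let $G$ be a $\{K_3,\text{fork}\}$-free graph with $\chi(G)\ge 4$ having the fewest vertices. Then $G$ is $4$-vertex-critical: it is connected, $\chi(G-v)\le 3$ for every vertex $v$, and consequently $G$ has minimum degree at least $3$. Since $G$ is triangle-free and $\chi(G)>2$, it contains an odd cycle, and a shortest one is induced and of length $5$ or at least $7$; in the latter case $G$ would be a fork-free graph of odd girth at least $7$, so Theorem~\ref{thm:grith7} would give $\chi(G)\le 3$, a contradiction. Hence $G$ contains an induced $C_5$, say $C=v_1v_2v_3v_4v_5$, and the remainder of the argument is a structural analysis of how $G$ attaches to $C$.

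First I would set up the attachment classes of $C$. For each external vertex $u$, triangle-freeness forces $N(u)\cap V(C)$ to be an independent set of $C_5$, hence of size $0$, $1$, or $2$, with any pair at distance two on $C$. This yields a partition of $V(G)\setminus V(C)$ into singleton classes $L_i$ (adjacent to exactly $v_i$), distance-two classes $W_i$ (adjacent to exactly $\{v_{i-1},v_{i+1}\}$), and the set $Z$ of vertices with no neighbor on $C$, indices taken mod $5$. Triangle-freeness further makes each $L_i$ and each $W_i$ independent and imposes anticompleteness constraints between classes sharing a neighbor on $C$. I would then exploit fork-freeness to control how these classes interact with one another and with $Z$. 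The central tool to prove is a \emph{concentration lemma}: around any vertex $v$ of degree at least $4$ (in particular around each $v_i$ once an incident class is nonempty), the second neighborhood cannot spread out, because two neighbors of $v$ with private, mutually non-adjacent outside-neighbors, together with two further neighbors of $v$, produce an induced fork. Quantified, this forbids distinct classes from simultaneously reaching independent vertices of $Z$ and pins down the adjacency pattern between $Z$ and the neighborhood of $C$.

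From these local constraints I would aim to show that $G$ is highly structured near $C$: that the closed neighborhood of $C$ dominates $G$, and that the incidences among the $W_i$, the $L_i$, and $Z$ collapse to a bounded template. To conclude I would use the reformulation that $\chi(G)\le 3$ if and only if there is an independent set $S$ with $G-S$ bipartite. The goal is to construct such an $S$ explicitly from the attachment data — a well-chosen transversal of the odd cycles created by $C$ and its $W_i$-blow-ups — and to verify via the structural lemmas that $G-S$ has no odd cycle. To make the extremal choices robust, I would not fix $C$ arbitrarily but choose an induced $C_5$ optimizing a parameter (for instance maximizing the number of dominated vertices, or the total size of $\bigcup_i W_i$), so that any surviving ``bad'' vertex either creates a fork or yields a strictly better $C_5$, a contradiction in both cases.

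The main obstacle, where essentially all the work lies, is the exhaustive control of $W_i$, $L_i$, and especially $Z$: showing that fork-freeness genuinely prevents the second and third neighborhoods of $C$ from harboring a new odd cycle that no single independent set can break. The difficulty is that fork-freeness is a \emph{non-local} constraint — it forbids a seven-vertex configuration rather than a bounded-radius ball — so the concentration lemmas must be iterated and combined with the minimum-degree and criticality hypotheses to rule out long induced paths and secondary induced $C_5$'s emanating from $C$. I expect the proof to split into many cases according to which attachment classes are nonempty and how $Z$ connects to them, with the hardest cases being those in which several $W_i$ are simultaneously large, since there the potential for hidden odd cycles is greatest and fork-avoidance must be invoked most delicately.
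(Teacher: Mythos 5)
Your opening moves are sound and match the paper's: take a vertex-minimal counterexample $G$, note it is $4$-critical with minimum degree at least $3$, and use Theorem~\ref{thm:grith7} to conclude that $G$ contains a $5$-cycle, around which the structural analysis is organized. The attachment-class decomposition ($L_i$, $W_i$, $Z$) is also the natural starting point. But from that point on the proposal is a plan rather than a proof: the ``concentration lemma'' is never precisely stated or proved, the claimed collapse of the incidences among $W_i$, $L_i$, and $Z$ to a ``bounded template'' is asserted as a goal, and the independent set $S$ with $G-S$ bipartite is never constructed. You say yourself that ``essentially all the work'' lies in controlling $Z$ and the $W_i$; that work is exactly what is missing, and it is not routine. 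In the paper it occupies two full sections of delicate fork-avoidance and recoloring arguments: first a proof that $G$ is $3$-connected and that no two vertices of $N^2(C)$ are joined to a common vertex of $C$ by independent paths of length $2$ (Lemma~\ref{lem:no-2-converge-1}, itself resting on Lemmas~\ref{lem:u1-u2} and~\ref{lem:u1-u2-complete}, each several pages long), and then a chain of lemmas about $5$-cycles minimizing $\abs{N^{\geq 2}(C)}$ culminating in Lemma~\ref{lem:one_neighbor_in_C}. None of these has a counterpart in your sketch beyond the wish that ``two neighbors of $v$ with private, mutually non-adjacent outside-neighbors \dots produce an induced fork,'' which is far weaker than what is needed.

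One substantive point of direction: the paper's decisive structural fact is that for a $5$-cycle minimizing $\abs{N^{\geq 2}(C)}$ \emph{all} the classes $W_i$ are empty --- every external vertex has at most one neighbor on $C$ --- after which the endgame is a finite case analysis on how many vertices of $C$ have degree $3$, settled by explicit colorings and local recolorings rather than by exhibiting an independent set whose removal leaves a bipartite graph. Your proposed extremal choice (maximizing the dominated set or the total size of $\bigcup_i W_i$) points the opposite way from the paper's (minimizing $\abs{N^{\geq 2}(C)}$), and your anticipated hardest case, ``several $W_i$ simultaneously large,'' is precisely the configuration the paper's choice of $C$ eliminates before the endgame begins. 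So the gap is not merely that details are deferred: the extremal parameter you propose would likely steer the case analysis toward configurations that the correct choice of parameter removes entirely, and there is no evidence in the proposal that the deferred lemmas are provable in the form you state them.
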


Thus together with all the previous results, we now have a complete characterization of all the graphs $B$ such that $\{K_3, B\}$-free graphs are $3$-colorable.

The \textit{paw} graph is a triangle with an attached end vertex (see Figure \ref{fig:forbidden_graphs}). Randerath \cite{Randerath} observed that the results concerning triangle-free graphs can be transformed to those concerning paw-free graphs, i.e., \{paw, $B$\} is a good Vizing pair if and only if $\{K_3, B\}$ is a good Vizing pair. Hence Theorem \ref{thm:main_theorem} implies the following corollary.

\begin{corollary}
\{paw, $B$\} is a saturated Vizing pair if and only if  $B\in \{H, \textrm{ fork}\}$.
\end{corollary}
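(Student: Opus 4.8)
The plan is to prove the equivalence using three inputs: Randerath's observation \cite{Randerath} that $\{\text{paw}, B\}$ is a good Vizing pair exactly when $\{K_3, B\}$ is; his characterization \cite{Randerath} of the graphs $B$ for which every triangle-free $B$-free graph is $3$-colorable, namely $B \cong H$ or $B$ an induced subgraph of the fork; and Theorem \ref{thm:main_theorem}, which supplies the fork. I will also use the structural fact that a graph is paw-free if and only if each of its components is triangle-free or complete multipartite. First I record that $\{\text{paw}, H\}$ and $\{\text{paw}, \text{fork}\}$ are good Vizing pairs. For $\{\text{paw}, \text{fork}\}$ the Vizing bound follows by applying the structural fact componentwise: a complete multipartite component $C$ has $\chi(C)=\omega(C)$, while a triangle-free fork-free component $C$ satisfies $\chi(C)\le \omega(C)+1$ by Theorem \ref{thm:main_theorem} (here $\omega(C)\le 2$), so $\chi(G)\le\omega(G)+1$; the case of $H$ is Randerath's. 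Non-redundancy holds in both cases because triangle-free graphs of arbitrarily large chromatic number (Tutte) are paw-free and break the bound, while $C_5[K_3]$ is simultaneously fork-free and $H$-free—every connected triangle-free induced subgraph of a blow-up $C_5[K_t]$ has at most five vertices—yet has $\chi\ge 8 > 7 = \omega+1$ (it has $15$ vertices and independence number $2$), so neither forbidden graph is redundant.

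For the forward implication, suppose $\{\text{paw}, B\}$ is saturated. Then it is good, so $\{K_3, B\}$ is good and hence $B\cong H$ or $B$ is an induced subgraph of the fork. If $B$ were a proper induced subgraph of the fork, then $\{\text{paw}, \text{fork}\}$—good by the previous paragraph—would be a good pair lying strictly above $\{\text{paw}, B\}$ in its second coordinate, contradicting saturation; so in that case $B\cong\text{fork}$. Thus $B\in\{H,\text{fork}\}$.

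For the reverse implication, let $B\in\{H,\text{fork}\}$; it remains to prove saturation. Take any good pair $\{A', B'\}$ with $\text{paw}\subseteq A'$ and $B\subseteq B'$. Since $A'$ contains a triangle it is not acyclic, so, as at least one of $A',B'$ must be acyclic (Erd\H{o}s \cite{erdos}), $B'$ is a tree. Because $K_3\subseteq A'$, every $\{K_3, B'\}$-free graph is $\{A', B'\}$-free and hence satisfies the Vizing bound; being triangle-free, every such graph is then $3$-colorable, so Randerath's characterization \cite{Randerath} gives $B'\cong H$ or $B'\subseteq\text{fork}$. Since $H$ is not an induced subgraph of the fork and the fork is not an induced subgraph of $H$, the relation $B\subseteq B'$ forces $B'\cong B$.

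The crux is the remaining claim that $A'\cong\text{paw}$, i.e. that the paw is maximal. I would reduce this to the covers of the paw in the induced-subgraph order—the connected five-vertex graphs containing an induced paw—since if some $A'\supsetneq\text{paw}$ formed a good pair with $B$, then augmenting a fixed induced paw of $A'$ by an adjacent vertex (which exists as $A'$ is connected) produces such a cover $A''\subseteq A'$, and $A''$-free graphs are $A'$-free, so $\{A'', B\}$ still obeys the Vizing bound. It then suffices to refute each of the finitely many covers $A''$ by exhibiting, for each, a $\{A'', B\}$-free graph that violates the bound; blow-ups of $C_5$ such as $C_5[K_3]$ and related triangle-free-restricted constructions are the natural source. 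I expect this last step to be the main difficulty, since one must certify simultaneously that a single construction is $B$-free, avoids the prescribed cover, and breaks the Vizing bound; it is precisely here that Randerath's \cite{Randerath} treatment of the triangle case, now completed by Theorem \ref{thm:main_theorem}, is invoked.
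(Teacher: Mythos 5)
Your write-up is far more detailed than the paper's treatment, which disposes of this corollary in one line by invoking Randerath's observation that $\{\text{paw},B\}$ is a good Vizing pair if and only if $\{K_3,B\}$ is, together with the now-completed characterization of the latter pairs. Most of what you do is correct and is a genuine expansion of what the paper leaves implicit: the componentwise use of the paw-free structure theorem plus Theorem \ref{thm:main_theorem} to get the Vizing bound, the witness $C_5[K_3]$ for non-redundancy (your computation $\chi\ge 8>7=\omega+1$ and the five-vertex bound on connected triangle-free induced subgraphs both check out), the forward implication, and the Erd\H{o}s-plus-Randerath argument pinning down $B'$ in the reverse implication are all sound.

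The genuine gap is exactly where you flag it: you never prove that $A'\cong\text{paw}$, i.e.\ that no connected proper supergraph of the paw forms a good pair with $H$ or the fork, and without this the saturation direction is simply not established. Moreover, the plan you sketch cannot succeed as stated. Among the five-vertex covers of the paw are the bowtie (two triangles sharing a vertex) and the triangle with a pendant path of length two, and both of these \emph{are} induced subgraphs of $C_5[K_t]$: for the bowtie, place the apex in blob $1$ and the two twin pairs in blobs $2$ and $5$, which are nonadjacent in $C_5$; a similar placement along blobs $2,1,5,4$ realizes the triangle with a tail. So blow-ups of $C_5$ (which were enough for non-redundancy) provide no $\{A'',B\}$-free counterexample for those covers, and since triangle-free graphs are useless here (fork-free triangle-free graphs satisfy the bound by Theorem \ref{thm:main_theorem}), genuinely different constructions are required. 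This is precisely the content of Randerath's thesis that the paper cites rather than reproves; you must either import his maximality result for the paw explicitly, or supply new counterexample graphs for each remaining cover.
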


For convenience, a path (respectively, cycle) in a graph is often represented as a sequence (respectively, cyclic sequence) of vertices, with consecutive vertices being adjacent in the graph. 
Throughout the paper, a fork is denoted by a $5$-tuple consisting of vertices and edges. For instance,  $(w_1, w_2 w_3, w_4 w_5, w_6, w_7)$ denotes the fork in which the vertices are labelled such that $w_1$ is the unique vertex of degree $4$ in the fork, and $w_1w_2w_3,
w_1w_4w_5, w_1w_6, w_1w_7$ are paths from $w_1$ to the four leaves of the fork. 
Let $G$ be a graph. 
Given a subgraph $S$ of $G$ and a vertex $v \in V(G)$, let $d(v,S) = \min_{s\in V(S)}\{ d(v,s)\}$, where $d(v,s)$ denotes the distance between $v$ and $s$ in $G$. For a positive integer $i$, let $N^i(S) = \{v \notin S: d(v,S) = i\}$, $N^{\geq i}(S) = \cup_{j\geq i} N^j(S)$, and we often write $N(S)$ for $N^1(S)$.
If $S= \{v\}$, we write $N(v)$ for $N(\{v\})$. 
Two paths are called \textit{independent} if no vertex of any path is internal to the other.

For the rest of this section, we describe additional notations and terminologies used in the paper.
For a positive integer $k$, we use $[k]$ to denote the set $\{1, \ldots, k\}$. Let $G$ be a graph. 
For $u,v\in V(G)$ with $uv\not\in E(G)$, define $G+uv$ as the graph obtained from $G$ by adding $uv$ as an edge.
For $S\subseteq V(G)$, let $G[S]$ denote the subgraph of $G$ induced by $S$ and let $G-S$ denote the graph $G[V(G)\backslash V(S)]$.
For any subgraph $K$ of $G$, let $G-K = G-V(K)$, and if $V(K) = \{v\}$, then we write $G-v$ instead. 
For $a,b\in V(G)$ and $R\subseteq V(G)$, $a \sim b$ denotes that $ab\in E(G)$, and $a \sim R$  denotes that there exists  a vertex $r \in R$ such that $a \sim r$. For $A, B\subseteq V(G)$, we use  $A\sim B$ to denote the case when $a\sim b$ for some $a\in A$ and $b\in B$. Moreover, we say that $A$ is {\it complete} to $B$ if $a\sim b$ for all $a\in A$ and $b\in B$, and that $A$ is {\it anticomplete} to $B$ if $a\nsim b$ for all $a\in A$ and $b\in B$. 
Given a vertex $k$-coloring $\sigma: V(G)\to [k]$ of a graph $G$, we use $\sigma(v)$ to denote the color of a vertex $v$ and $\sigma(S):= \cup_{v\in S} \sigma(v)$ to be the set of colors appearing in a vertex subset $S$. Moreover, given a color $j\in [k]$, we use $\sigma^{-1}(k)$ to denote the set of vertices colored $j$.

\section{Outline of proof techniques}

For the remainder of the paper, let $G$ be a graph such that
\begin{enumerate}[(1)]
    \itemsep0em 
    \item $G$ is \{$K_3$, fork\}-free,
    \item $\chi(G) \geq 4$,
    \item subject to (1) and (2), $|V(G)|$ is minimum.
\end{enumerate} We will show that $G$ does not exist and hence Theorem~\ref{thm:main_theorem} holds. Often, when we label sets and vertices as $A_i$ and $a_i$, the subscripts are taken modulo $5$; so $A_0 = A_5$ and $A_6 = A_1$. Throughout, we will use $1,2,3$ to represent colors in all $3$-colorings of graphs.

Our proof of Theorem~\ref{thm:main_theorem} proceeds in the following manner:

 \begin{itemize}
 \item We show that $G$ is $3$-connected and, for any $5$-cycle $C$ in $G$ and distinct vertices $u_1,u_2\in N^2(C)$, $G$ has no independent paths of length $2$ from both $u_1$ and $u_2$ to the same vertex on $C$. 
 This is proved in Section \ref{sec:independent_paths}. 

  \item We then consider $5$-cycles $C$ in $G$ that minimize $\left|N^{\geq 2}(C)\right|$ (such $5$-cycles are called \textit{good} $5$-cycles). We will derive useful properties about the structure of $G$ around a good $5$-cycle. This is done in Section \ref{sec:good_cycle}.

  \item Using the structural information obtained in Sections \ref{sec:independent_paths} and \ref{sec:good_cycle}, we complete the proof of Theorem~\ref{thm:main_theorem} in Section \ref{sec:fork_main} by breaking the proof into cases based on the number of vertices in $C$ that are of degree $3$ in $G$.
 \end{itemize}

To avoid repetitive technical analysis,
part of our proof will be computer-assisted, whose implementation can be accessed at 
\href{https://github.com/jschroeder35/fork_triangle_free_code}{https://github.com/jschroeder35/fork\_triangle\_free\_code}. 
For a purely combinatorial proof without computer assistance, please refer to an earlier version of our paper at \href{https://arxiv.org/pdf/2111.10469v1}{https://arxiv.org/pdf/2111.10469v1}. In the rest of the section, for the rigour of the paper, we will give a high-level description of the computer search algorithms that are used to derive additional structures or contradictions in the paper. Readers who are not interested in the implementation of these algorithms are welcomed to skip the rest of this section.

There are three main algorithms which we refer to them as \textit{Structure Algorithm}, \textit{Coloring Algorithm}, and \textit{Mixed Algorithm} respectively. These algorithms all start with some substructure of $G$ that is known or assumed at certain stage of the proof and seek to find as much structure or a contradiction if possible. Graphs in the code have an edge-list representation, but since it is not always known whether there is an edge between a pair of vertices or not, each vertex has three lists of vertex pairs: edges, non-edges, or unknown edges incident to it. In the case of forbidden induced subgraphs (such as forks and triangles), note that there are no unknown vertex pairs.  

\begin{itemize}
    \item (Structure Algorithm) This algorithm adds structures or forces a contradiction through the following nested steps: 
    
    \begin{enumerate}
        \item[(1)] Given a forbidden induced subgraphs list, iterate over every unknown vertex pair in the graph and check if, after switching the unknown vertex pair to an edge or a non-edge, any of the forbidden subgraphs appears in the graph. If both cases result in some forbidden subgraph, then (1) returns a contradiction; otherwise if there is a forbidden subgraph when adding an edge, then we switch the unknown vertex pair to a non-edge, and vice versa. This continues until no further updates can be made.

        \item[(2)] Given a forbidden induced subgraphs list, for each of its members $F$ in the list, (2) checks if there exists an induced subgraph $H$ of $G$ such that $E(F) = E(H)$; for each such subgraph $H$, it first returns a list of the unknown vertex pairs in $H$. Among this list of vertex pairs, at least one vertex pair must be an edge to prevent the forbidden graph $F$ from occurring in $G$. Thus for each vertex pair in the list, we temporarily set it to be an edge and apply (1) to the resulting graph. If a contradiction is obtained for each such $H$, then (2) returns a contradiction. Otherwise, any edge/non-edge that is present in all the cases that do not result in a contradiction will be added to the graph, and this process continues until no further updates can be made. Optionally, it will iterate over all $5$-cycles in the graph besides $C_1$. If any $5$-cycle $C$ does not have at least as many vertices of distance $2$ away from it as $C_1$, then the algorithm iterates over all possible ways to add a new vertex which is adjacent to $C_1$ and has distance $2$ from the cycle $C$, and then recursively run (2) up to a specified maximum depth. Again, a contradiction is returned if all cases lead to a contradiction, otherwise edges/non-edges are added to the graph if they are present in all cases that do not lead to a contradiction.

        \item[(3)] Iterate over every unknown vertex pair in $G$, add it to the graph as an edge, then as a non-edge and applying (2) in both cases. If both cases result in contradictions, then (3) returns a contradiction. Otherwise, if changing the unknown vertex pair to an edge results in a contradiction, we then switch it to a non-edge, and update the graph based on (2), and vice versa.

        \item[(4)] Alternate between applying (3), and attempting to add new vertices to the graph that are forced to exist either by the choice of $C_1$ (in Section \ref{sec:independent_paths}) similar to (2), or by neighborhood containment condition, as in Lemma \ref{lem:no_neighborhood_containment}. 
    \end{enumerate}

    \item (Coloring Algorithm) This algorithm takes in a partial coloring of $G$ and a starting vertex $v$, and for each color $i$, find a list of all current or possible neighbors of $v$ that could prevent $v$ from being colored with color $i$ among (a) current or possible neighbors of $v$ that are or could be colored $i$; (b) newly added vertices that are colored $i$; (c) (in recursion) current or possible neighbors of $v$ that would be recolored with $i$ by the recoloring strategy. If this list is empty, this algorithm returns a contradiction as $v$ can be assigned color $i$. If this list contains exactly one vertex, namely $w$, then we add the relevant coloring/edges to the graph to make $w$ a neighbor of $v$ of color $i$, and (if $w$ is not a vertex in case (b) or (c)) recurse this algorithm on $w$. If the list contains more than one vertices, and all cases require updates to the graph, then we check if any edges, non-edges, or vertex colors are common among all cases; and if so, add them to the graph.
    \item (Mixed Algorithm) This algorithm alternates between running the Structure Algorithm and doing the following: it iterates over every triple of vertices $v_1, v_2, v_3$ in the graph, and if $N(v_1) \subseteq N(v_2) \cup N(v_3)$, then the algorithm starts a new coloring $\sigma$ of the graph where $\sigma(v_2) = 1$ and $\sigma(v_3)=2$, runs the Coloring Algorithm above with $v=v_1$ and updates the graph accordingly if no contradiction is obtained (the coloring $\sigma$ is removed after the updates). 
    
    Optionally, it also will check, for each degree-$3$ vertex $v$ in the graph, whether assuming $v$ has no other neighbors and applying the Coloring Algorithm on $v$ (in which $v$'s neighbors must be colored $1$, $2$, and $3$ respectively) produces a contradiction. If the Coloring Algorithm returns a contradiction, then a new neighbor of $v$ is added to the graph before continuing. Alternately, if adding a new neighbor of $v$ produces a contradiction after applying (3) from the Structure Algorithm, then $v$'s max degree is set to be $3$, and any new structure obtained from applying the Coloring Algorithm with starting vertex $v$ is added to the graph.

\end{itemize}

Note that by the minimality of $G$, there must exist a 3-coloring $\sigma$ of $G-v$ for any $v \in G$. When running the (coloring) algorithm, we suppose that the colored vertices are providing partial information about $\sigma$, and $v$ must have neighbors of each color since $G$ is assumed not to be $3$-colorable. We also remark that the lemmas in Section \ref{sec:independent_paths} and Section \ref{sec:good_cycle} all have forbidden subgraph characterizations. Therefore, the structural constraints obtained from these lemmas, once established, are added into the forbidden subgraph list.

\section{Independent paths from $C$ to $N^2(C)$}\label{sec:independent_paths}
Recall that $G$ is a minimum-order graph among all \{$K_3$, fork\}-free graphs with $\chi(G) \geq 4$.
The main objective of this section is to show Lemma \ref{lem:no-2-converge-1}, which states that for any $5$-cycle $C$ in $G$ and distinct vertices $u_1,u_2\in N^2(C)$, $G$ has no independent paths of length $2$ from both $u_1$ and $u_2$ to some common vertex on $C$. 
First, we show an easy fact, which will often be used later without explicit reference.

\begin{lemma}\label{lem:no_neighborhood_containment}
For any two non-adjacent vertices $u, v \in V(G)$, $N(v) \not\subseteq N(u)$.
\end{lemma}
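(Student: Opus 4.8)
The plan is to argue by contradiction using the minimality of $G$. Suppose, contrary to the statement, that there are distinct vertices $u, v \in V(G)$ with $N(v) \subseteq N(u)$. (The statement is understood for distinct $u,v$, since $N(v)\subseteq N(v)$ always holds.) I will produce a proper $3$-coloring of $G$, contradicting property (2), and thereby show no such pair exists.

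First I would pass to the smaller graph $G - v$. Since being $\{K_3, \text{fork}\}$-free is a hereditary property (both $K_3$ and the fork are forbidden as \emph{induced} subgraphs), $G - v$ still satisfies property (1). As $|V(G-v)| < |V(G)|$, the minimality in property (3) forces $\chi(G - v) \leq 3$, so I may fix a proper $3$-coloring $c \colon V(G)\setminus\{v\} \to \{1,2,3\}$.

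The key observation is that the color $c(u)$ is a legal choice for $v$. Because $G$ has no loops we have $v \notin N(v)$, so the hypothesis gives $N(v) \subseteq N(u)\setminus\{v\}$; in particular every neighbor of $v$ survives in $G-v$ and is a neighbor of $u$. Note that $u$ itself survives in $G-v$ since $u \ne v$, so $c(u)$ is defined. As $c$ is proper, no neighbor of $u$ receives color $c(u)$, and hence no neighbor of $v$ receives $c(u)$. Extending $c$ by setting $c(v) = c(u)$ therefore yields a proper $3$-coloring of all of $G$, contradicting $\chi(G) \geq 4$.

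This is a routine minimality argument and I do not anticipate any substantive obstacle; the only points needing care are that $u \ne v$ (so that $u$ remains colored in $G-v$) and that deleting $v$ does not remove any neighbor of $v$ from $u$'s neighborhood, both of which follow immediately from the absence of loops together with the hypothesis $N(v)\subseteq N(u)$.
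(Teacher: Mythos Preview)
Your proof is correct and follows essentially the same argument as the paper: delete $v$, use minimality to obtain a $3$-coloring of $G-v$, and then color $v$ with the color of $u$. You have simply spelled out a few details (such as $u\neq v$ and why $c(u)$ is available at $v$) that the paper leaves implicit.
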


\begin{proof}
Suppose $N(v) \subseteq N(u)$. By the choice of $G$, $G-v$ admits a $3$-coloring, say $\sigma$. But now we can color $v$ with $\sigma(u)$ to extend $\sigma$ to a 3-coloring of $G$, contradicting that $\chi(G)\geq 4$.
\end{proof}

We also need the following result on the connectivity of $G$.

\begin{lemma}\label{lem:no-2-cut}
$G$ is 3-connected.
\end{lemma}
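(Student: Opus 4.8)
The plan is to exploit the minimality of $G$: any vertex cut of size at most $2$ can be destroyed by combining $3$-colorings of the pieces it creates, so if $G$ had such a cut it would be $3$-colorable. I would first record two standard reductions. Since some component of a disconnected graph has the full chromatic number, minimality forces $G$ to be connected. Next, if some $v\in V(G)$ satisfies $|N(v)|\le 2$, then $G-v$ is a smaller $\{K_3,\text{fork}\}$-free graph, hence $3$-colorable by minimality, and since $v$ sees at most two colors we may extend to $v$; thus $\delta(G)\ge 3$. (This is the same extension idea as in Lemma~\ref{lem:no_neighborhood_containment}, which I use freely.)

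To rule out a cut vertex $v$, let $G_1,\dots,G_t$ ($t\ge 2$) be obtained by adding $v$ to each component of $G-v$. Each $G_i$ is a proper induced subgraph, hence $\{K_3,\text{fork}\}$-free and $3$-colorable; permuting colors within each $G_i$ so that $v$ receives color $1$ everywhere, the colorings agree on $\{v\}$ and glue to a $3$-coloring of $G$, a contradiction. Hence $G$ is $2$-connected.

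Now suppose $\{a,b\}$ is a $2$-cut and let $D_1,\dots,D_m$ ($m\ge 2$) be the components of $G-\{a,b\}$, with pieces $H_j=G[D_j\cup\{a,b\}]$. By $2$-connectivity each of $a,b$ has a neighbor in every $D_j$, and each $H_j$ is a proper induced subgraph, hence $3$-colorable. The key reduction is to show that every $H_j$ admits a $3$-coloring in which $a$ and $b$ receive \emph{distinct} colors; granting this, I permute colors in each $H_j$ so that $a\mapsto 1$ and $b\mapsto 2$, and glue along $\{a,b\}$ to obtain a $3$-coloring of $G$, contradicting $\chi(G)\ge 4$. When $a\sim b$ this distinctness is automatic, so the whole difficulty lies in the case $a\not\sim b$. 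There, the existence of a $3$-coloring of $H_j$ with $a,b$ distinct is equivalent to $H_j+ab$ being $3$-colorable, so it suffices to prove that $H_j+ab$ is $3$-colorable for each $j$.

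Suppose instead that $H_j+ab$ is not $3$-colorable for some $j$. Since $|V(H_j+ab)|=|V(H_j)|<|V(G)|$, minimality implies $H_j+ab$ is not $\{K_3,\text{fork}\}$-free; as $H_j$ itself is $\{K_3,\text{fork}\}$-free, the offending induced $K_3$ or induced fork must use the new edge $ab$. A triangle through $ab$ gives a common neighbor of $a$ and $b$ in $D_j$, while a fork through $ab$ gives a prescribed branch structure rooted at $a$ or $b$. I then bring in a second component: choosing any $D_k$ with $k\ne j$, the connectedness of $D_k$ together with the fact that $a,b$ both have neighbors in $D_k$ yields an induced $a$–$b$ path $P$ whose interior lies in $D_k$, and since there are no edges between $D_j$ and $D_k$, this path is anticomplete to the witness found in $H_j$ except at $a$ and $b$. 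Splicing $P$ onto that witness should produce an induced fork in $G$ itself (using $\delta(G)\ge 3$ to supply, and triangle-freeness to keep chord-free, the extra arms needed to realize a degree-$4$ center), contradicting that $G$ is fork-free. This splicing step — confirming, across the several shapes the $K_3$- or fork-witness in $H_j$ can take, that the combined configuration is genuinely induced and exhibits a degree-$4$ center with two subdivided and two unsubdivided arms — is the main obstacle, and is exactly where the degree bound and triangle-freeness are indispensable. Completing these subcases shows no $2$-cut exists, so $G$ is $3$-connected.
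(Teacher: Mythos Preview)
Your overall architecture is reasonable and the reduction to ``every $H_j$ admits a $3$-coloring with $a,b$ distinct'' is a valid sufficient condition. The fork-witness case of the splicing can indeed be made to work with a short case analysis on the position of $ab$ in the fork. The genuine gap is the triangle-witness case, which you pass over with a one-line promise. If $H_j+ab$ fails to be $3$-colorable and the only forbidden induced subgraph it acquires is a triangle $abc$ (i.e.\ $H_j+ab$ is fork-free), all you have extracted is a single common neighbor $c\in D_j$ of $a$ and $b$. An induced $a$--$b$ path through $D_k$, together with $c$, does not by itself produce a degree-$4$ center: if $a$, $b$, $c$ each have degree exactly $3$ (which $\delta(G)\ge 3$ allows), none of them can be the center of a fork, and you have no structural control over the vertices of $D_k$ beyond connectivity. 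Your sketch uses nothing about $H_k$ except that it supplies a path; in particular you never use that some other piece forces $a,b$ to receive \emph{distinct} colors, which is exactly the second constraint needed to pin down a fork.

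The paper's proof proceeds differently and avoids this difficulty. Rather than adding the edge $ab$ and invoking minimality, it observes that the only obstruction to gluing is that one piece $H_1$ forces $\sigma(x)=\sigma(y)$ while another piece $H_2$ forces $\sigma(x)\ne\sigma(y)$, and then exploits \emph{both} constraints via recoloring (Kempe-chain) arguments: the rigidity of $H_1$ yields a path $u_1xx_1v_1$ with prescribed colors, and the rigidity of $H_2$ yields vertices $z_2,x_2,v_2$ around $x$, after which $(x,\,x_1v_1,\,x_2v_2,\,u_1,\,z_2)$ is an explicit fork in $G$. This is both shorter and sidesteps the case analysis your splicing would require; to repair your argument you would at minimum need to bring in the coloring constraint on the second piece in the triangle case.
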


\begin{proof}
Clearly, $G$ must be $2$-connected by the minimality of $G$. Now assume that $G$ is not $3$-connected and let
$\{x,y\}$ be a $2$-cut in $G$. Then $G$ has two induced subgraphs $H_1,H_2$ such that $H_1\cup H_2=G$, $V(H_1\cap H_2)=\{x,y\}$,  and $V(H_i)\setminus V(H_{3-i})\ne \emptyset$ for $i=1,2$. By the choice of $G$, both $H_1$ and $H_2$ admit $3$-colorings. 

If there exist $3$-colorings $\sigma_i$ for $H_i$, $i=1,2$, such that $|\sigma_1(\{x,y\})|=|\sigma_2(\{x,y\})|$, then we can easily modify and combine $\sigma_1$ and $\sigma_2$ to give a $3$-coloring of $G$, a contradiction. Thus, we may assume that for any $3$-colorings $\sigma_i$ of $H_i$, $i=1,2$, we have
$|\sigma_i(\{x,y\})|=i$. This, in particular, implies that $xy\notin E(G)$. 
Without loss of generality, we may assume that $\sigma_1(x)=\sigma_1(y)=\sigma_2(x)=1$ and $\sigma_2(y)=2$. 

First, there exists  $z_2\in N(\{x,y\})\cap V(H_2) \cap \sigma_2^{-1}(3)$; otherwise, we may modify $\sigma_2$ (by changing $\sigma_2(x)$ and $\sigma_2(y)$ to $3$) to obtain a $3$-coloring $\sigma_2'$ of $H_2$ with 
$|\sigma_2'(\{x,y\})|=1$, a contradiction. By symmetry, assume $z_2\in N(x)$.

Next, we claim that there exists a path $u_1xx_1v_1$ in $H_1$ such that $\sigma_1(u_1)=\sigma_1(v_1)=3$ and $\sigma_1(x_1)=2$. First, there exists $u_1\in N(x)\cap V(H_1)\cap \sigma_1^{-1}(3)$; otherwise, we may modify $\sigma_1$ (by changing $\sigma_1(x)$ from $1$ to $3$) to obtain a $3$-coloring $\sigma_1'$ of $H_1$ with $|\sigma_1'(\{x,y\})|=2$, a contradiction. Similarly, there exists $x_1\in N(x)\cap V(H_1)\cap \sigma_1^{-1}(2)$. Moreover, we may choose $x_1$ so that there exists $v_1\in N(x_1)\cap V(H_1) \cap \sigma_1^{-1}(3)$; as, otherwise, we may modify $\sigma_1$ (by recoloring $N(x)\cap V(H_1)\cap \sigma_1^{-1}(2)$ with $3$ and recoloring $x$ with $2$)
to obtain a $3$-coloring $\sigma_1'$ of $H_1$ with $|\sigma_1'(\{x,y\})|=2$, a contradiction. 

Similarly, there exists 
$x_2\in N(x)\cap V(H_2)\cap\sigma_2^{-1}(2)$; for otherwise we can modify $\sigma_2$ to obtain a $3$-coloring $\sigma_2'$ of $H_2$ such that $|\sigma_2'(\{x,y\})|=1$, a contradiction. Moreover, the vertex $x_2$ can be chosen so that there exists $v_2\in N(x_2)\cap V(H_2)\cap \sigma_2^{-1}(3)$. 
Now $(x,x_1v_1, x_2v_2, u_1,z_2)$ is a fork in $G$, a contradiction.
\end{proof}

Before we prove Lemma \ref{lem:no-2-converge-1}, we prove two lemmas concerning $N^2(C)$ for any $5$-cycle $C$ in $G$. We remark again that all proofs in the rest of the paper that use computer assistance also admit a purely combinatorial proof, which can be found in an earlier version of our paper\footnote{https://arxiv.org/pdf/2111.10469v1.}.

\begin{lemma}\label{lem:u1-u2}
Let $C$ be a $5$-cycle in $G$ and let $u_1, u_2 \in N^2(C)$ be distinct. Suppose $u_1 \sim u_2$. Then $N^2(u_1)\cap N^2(u_2)\cap V(C)=\emptyset$.
\end{lemma}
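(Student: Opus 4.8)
The plan is to suppose the conclusion fails and then force an induced fork. Pick $c\in N^2(u_1)\cap N^2(u_2)\cap V(C)$. Since $d(u_i,c)=2$, fix internal vertices $a_1,a_2$ so that $u_1a_1c$ and $u_2a_2c$ are paths of length $2$. As $G$ is triangle-free, $C$ is chordless, so write $C=c\,c^+c^{++}c^{--}c^-$, where $c^+,c^-$ are the two neighbors of $c$ on $C$ (hence $c^{++}\sim c^{--}$). Because $u_1,u_2\in N^2(C)$, neither is adjacent to any vertex of $C$; in particular $a_1,a_2\notin V(C)$, and the nine vertices $c,c^+,c^{++},c^{--},c^-,a_1,a_2,u_1,u_2$ are distinct. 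Triangle-freeness together with $u_1\sim u_2$ yields the non-adjacencies I will use throughout: $a_1\neq a_2$ and $a_1\nsim a_2$ (a common neighbor of $u_1,u_2$, or the edge $a_1a_2$, would close a triangle with $c$); $a_1\nsim u_2$ and $a_2\nsim u_1$ (else $u_1u_2a_i$ is a triangle); and $a_1,a_2\nsim c^+,c^-$ (each would close a triangle with $c$).

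First I consider the fork centered at $c$. The $5$-tuple $(c,\,a_1u_1,\,a_2u_2,\,c^-,\,c^+)$ meets every requirement of an induced fork except for the single edge $u_1u_2$ (its leg-ends $w_3=u_1$ and $w_5=u_2$ are adjacent). To repair this I swap one pendant leg for a leg running along $C$. I claim $(c,\,a_1u_1,\,c^+c^{++},\,c^-,\,a_2)$ is an induced fork unless $c^{++}\sim a_1$ or $c^{++}\sim a_2$, and, symmetrically, $(c,\,a_1u_1,\,c^-c^{--},\,c^+,\,a_2)$ is an induced fork unless $c^{--}\sim a_1$ or $c^{--}\sim a_2$; indeed, every other non-adjacency required is one recorded above, together with $u_1\nsim V(C)$ and the chordlessness of $C$. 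Hence if neither fork exists, then $c^{++}\sim\{a_1,a_2\}$ and $c^{--}\sim\{a_1,a_2\}$. Since $c^{++}\sim c^{--}$, triangle-freeness forbids both adjacencies at the same $a_i$; so, after possibly exchanging $(a_1,u_1)$ with $(a_2,u_2)$, I may assume $a_1\sim c^{++}$, $a_2\sim c^{--}$, $a_1\nsim c^{--}$, and $a_2\nsim c^{++}$.

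This symmetric residual configuration is the main obstacle, because on the nine vertices above the graph is now fork-free; in fact $C'=c\,a_1\,c^{++}\,c^{--}\,a_2$ is a second induced $5$-cycle. To finish I must bring in a vertex outside these nine. By Lemma~\ref{lem:no-2-cut}, $G$ is $3$-connected, so $c^+$ has a neighbor $s\notin\{c,c^{++}\}$, and triangle-freeness gives $s\nsim c$ and $s\nsim c^{++}$, so $c\,c^+s$ is an induced path of length $2$. Running the repair at $c$ with this new leg, the forks $(c,\,c^+s,\,a_1u_1,\,c^-,\,a_2)$ and $(c,\,c^+s,\,a_2u_2,\,c^-,\,a_1)$ are each induced unless $s$ is adjacent to one of the four outer vertices of the respective fork, i.e.\ unless $s\sim\{a_1,u_1,c^-,a_2\}$ and $s\sim\{a_2,u_2,c^-,a_1\}$. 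I then dispose of the surviving adjacency patterns for $s$ (the subcases $s\sim c^-$, $s\sim a_i$, and $s\sim u_i$) by further fork constructions---relocating the center or extending a different leg---aided where necessary by Lemma~\ref{lem:no_neighborhood_containment}, producing an induced fork in each case and contradicting that $G$ is fork-free. I expect this last step, eliminating the residual symmetric case using the extra vertices guaranteed by $3$-connectivity, to be the delicate part, since the natural fork centers are blocked by the cycle adjacencies ($c^\pm\sim c$ and $c^{\pm}\sim c^{\pm\pm}$); by contrast, the reduction to that single case is routine once the basic non-adjacencies are in hand.
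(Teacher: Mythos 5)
Your setup is correct and coincides with the first paragraph of the paper's proof: the same initial forks force $c^{++}\sim\{a_1,a_2\}$ and $c^{--}\sim\{a_1,a_2\}$, and triangle-freeness splits these as $a_1\sim c^{++}$, $a_2\sim c^{--}$. But the proof stops there. Everything after ``I then dispose of the surviving adjacency patterns for $s$ by further fork constructions'' is an unproved assertion, and it is precisely the part that cannot be done the way you describe. The subcase $s\sim c^-$ is fatal to your plan: a vertex $s$ adjacent to $c^+$ and $c^-$ and to nothing else among your nine vertices is locally consistent with both triangle-freeness and fork-freeness (every candidate fork center other than $c$ has degree at most $3$ in the configuration, and every candidate fork at $c$ using the leg $c\,c^+\,s$ is blocked by the edge $sc^-$ or by $c^{++}\sim c^{--}$). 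Indeed, one can exhibit a $10$-vertex triangle-free, fork-free graph containing exactly this configuration, so the lemma is simply not a local consequence of the two forbidden subgraphs; no amount of relocating centers or extending legs will produce an induced fork from this data alone.

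What the paper actually does in this residual case is global and relies essentially on the minimality of $G$: it partitions $N(C)$ into sets $A$, $B$, $Z$ (your troublesome $s$ is a $Z$-vertex), proves $d(a_1),d(b_1)\neq 3$ and $N^3(C)=\emptyset$ and $N^2(C)\subseteq N(u_1)\cup N(u_2)$ by $3$-coloring $G-a_1$ and other subgraphs and modifying the colorings, and finally writes down an explicit candidate $3$-coloring of all of $G$; since $\chi(G)\geq 4$, that coloring must fail on some edge $z_1z_2$ inside $Z$, and only this forced edge yields the concluding fork. Your proposal never invokes $\chi(G)\geq 4$ or the $3$-colorability of proper subgraphs beyond Lemmas~\ref{lem:no_neighborhood_containment} and~\ref{lem:no-2-cut}, so it is missing the main mechanism of the proof, not merely some routine case analysis. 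The step you flag as ``delicate'' is in fact a gap that requires a different kind of argument.
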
 
\begin{proof}
Suppose otherwise that there exists $v_1\in V(C)$ such that $v_1\in N^2(u_1)\cap N^2(u_2)$. Let $C=v_1v_2v_3v_4v_5v_1$, $a_1\in N(u_1)\cap N(v_1)$, and $b_1 \in N(u_2) \cap N(v_1)$. Then $a_1\neq b_1$ to avoid the triangle $a_1 u_1 u_2 a_1$, $v_3\sim \{a_1,b_1\}$ to avoid the fork  $(v_1, v_2 v_3, a_1u_1,
v_5, b_1)$, and $v_4\sim \{a_1, b_1\}$ to avoid the fork $(v_1, v_5 v_4, b_1 u_2, a_1, v_2)$. Since $G$ is triangle-free, we may assume by symmetry that $v_3 \sim b_1$ and $v_4 \sim a_1$. 
Define $$A:=N(u_1)\cap N(v_4), B=N(u_2)\cap N(v_3), \mbox{ and } Z=(N(v_2)\cup N(v_5))\setminus (N(v_3)\cup N(v_4)).$$
Note that $Z\cap (A\cup B)=\emptyset$ by construction, and $A \cap B = \emptyset$ as $G$ is triangle-free. 

\begin{claim}\label{cl:ABZpartition}
 $N(C)=A\cup B\cup Z$. 
 \end{claim}
 \begin{proof}
Let $x \in N(C)$. By symmetry, we may assume that $x\sim \{v_1, v_4, v_5\}$.
First, suppose $x \sim v_1$. Then $x \sim \{u_1, v_3\}$ to avoid the fork $(v_1, v_2 v_3, a_1 u_1, v_5, x)$. If $x \sim u_1$ then $x\nsim u_2$ to avoid the triangle $xu_1u_2x$ and hence $x \sim v_4$ to avoid the fork $(v_1, v_5 v_4, b_1 u_2, v_2, x)$; so $x \in A$. If $x \sim v_3$ then $x\nsim v_4$ to avoid triangle $xv_3v_4x$ and $x \sim u_2$ to avoid the fork $(v_1, v_5 v_4, b_1 u_2, v_2, x)$; so $x \in B$.

Now suppose $x \sim v_4$. Then $x \sim \{u_1, v_2\}$ to avoid the fork $(v_4, v_3 v_2, a_1 u_1, v_5, x)$. If $x \sim u_1$ then $x \in A$; so assume $x \sim v_2$ and $x \nsim u_1$. Then $x\sim b_1$ to avoid the fork $(v_1, v_2 x, a_1 u_1, v_5, b_1)$. But then $(b_1, v_1 v_5, u_2 u_1, v_3, x)$ is a fork in $G$, a contradiction.

Thus, we may assume $x \sim v_5$. If $x\not\sim v_3$ then $x\in Z$. Thus, assume
$x \sim v_3$. If $x\sim u_2$ then $x\in B$. If $x\not\sim u_2$ then $x\sim a_1$ to avoid the fork $(v_3, b_1u_2, v_4a_1, v_2,x)$. Now $(a_1,u_1u_2,v_1v_2, v_4,x)$ is a fork in $G$, a contradiction.  \end{proof}

\begin{claim}\label{cl:no_dist2_adj_AB}
$N^2(C) \cap (N(A) \cup N(B)) = \{u_1, u_2\}$, and $N^2(C) \subseteq N(u_1)\cup N(u_2)$.
\end{claim}
\begin{proof}
 Let $u \in N^2(C) \setminus \{u_1, u_2\}$. It suffices to show that $u\in N(\{u_1, u_2\})$.
 The Mixed Algorithm applied to $C \cup \{a_1, b_1, u_1, u_2, u\}$ finds a contradiction if $u \sim a_1$, so $u \nsim a_1$, and by symmetry $u \nsim b_1$. Furthermore, the Mixed Algorithm applied to $C \cup \{a_1, b_1, u_1, u_2, u, a\}$ finds a contradiction if $u$ is adjacent to some vertex $a \in N(v_4)$, so $u \nsim A$, and by symmetry $u \nsim B$. Finally, suppose for contradiction that $u \nsim \{u_1, u_2\}$. Then, by Claim \ref{cl:ABZpartition}, $u$ is adjacent to some vertex $z \in Z$. Without loss of generality, suppose $z \sim v_2$. Then, the Mixed Algorithm applied to $C \cup \{a_1, b_1, u_1, u_2, u, z_2\}$ obtains a contradiction.
\end{proof}

\begin{claim}\label{cl:3rd_neighborhood_empty_lem}
$N^3(C) = \emptyset$.
\end{claim}
\begin{proof}
Suppose $N^3(C)\ne \emptyset$ and let $w \in N^3(C)$ be arbitrary. Note that $w \nsim u_1$, otherwise the Mixed Algorithm applied to $C \cup \{a_1, b_1, u_1, u_2, w\}$ obtains a contradiction; similarly $w \nsim u_2$. Suppose now $w \sim u$ for some $u \notin \{u_1, u_2\}$. Then, By Claim \ref{cl:no_dist2_adj_AB}, $u$ must be adjacent to some vertex in $N(Z)$. Without loss of generality suppose that $u \sim N(v_5)$. But then, the Mixed Algorithm applied to $C \cup N(v_5) \cup \{a_1, b_1, u_1, u_2, w, u\}$ obtains a contradiction.
\end{proof}

 Define  $\sigma: V(G)\to [3]$ such that 
 $\sigma((\{v_3, v_5\} \cup N(u_1)) \setminus N(v_5))=\{2\}$, $\sigma((\{v_2, v_4\} \cup N(u_2))\setminus N(v_2))=\{3\}$, and $\sigma(x)=1$ for all $x\in \{v_1\} \cup Z$ not already colored. Since $\sigma$ cannot be a 3-coloring of $G$, there exist $z_1 \in N(v_5)\backslash N(v_2)$ and $z_2 \in N(v_2)\backslash N(v_5)$ such that $z_1$ and $z_2$ are both colored $1$, and $z_1\sim z_2$. But now, the Structure Algorithm applied to $C \cup \{a_1, b_1, u_1, u_2, z_1, z_2\}$ obtains a contradiction.
 This completes the proof of the lemma.
\end{proof}

\begin{lemma}\label{lem:u1-u2-complete}
Let $C$ be any $5$-cycle in $G$, let $u_1, u_2 \in N^2(C)$ with $u_1 \nsim u_2$, and let $v\in V(C)$. Suppose $u_1 a_1 v$ and $u_2 b_1 v$ are independent paths in $G$.  Then $a_1 \sim u_2$ and $b_1\sim u_1$. 
\end{lemma}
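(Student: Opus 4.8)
The plan is to argue by contradiction, using the symmetry of the hypothesis to cut the work in half. Write $C=v_1v_2v_3v_4v_5v_1$ with $v=v_1$, and pick $a_1\in N(u_1)\cap N(v_1)$ and $b_1\in N(u_2)\cap N(v_1)$ realizing the two paths. Independence gives $a_1\ne b_1$; since $u_1,u_2\in N^2(C)$ have no neighbour on $C$, we get $a_1,b_1\notin V(C)$ and $u_1,u_2\nsim V(C)$; and triangle-freeness (both $a_1,b_1\in N(v_1)$) gives $a_1\nsim b_1$. The statement is invariant under exchanging $(u_1,a_1)\leftrightarrow(u_2,b_1)$, so it suffices to prove $a_1\sim u_2$; the same argument applied to the exchanged configuration then yields $b_1\sim u_1$. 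Suppose, for contradiction, that $a_1\nsim u_2$.

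First I would run the obvious fork $(v_1,a_1u_1,b_1u_2,v_2,v_5)$ on the $7$ distinct vertices $v_1,a_1,u_1,b_1,u_2,v_2,v_5$. A direct check shows that the only pairs among these that could carry an edge beyond the six edges of the fork are $a_1u_2$ and $b_1u_1$ (every other potential edge is killed by triangle-freeness, by $u_1,u_2\in N^2(C)$, or by $u_1\nsim u_2$). As $a_1\nsim u_2$, if $b_1\nsim u_1$ as well this is an induced fork, a contradiction; hence $b_1\sim u_1$. We have reached the key ``one--missing--edge'' configuration: $\{a_1,b_1\}$ and $\{u_1,u_2\}$ induce a $4$-cycle missing only $a_1u_2$, with $v_1$ adjacent to both $a_1$ and $b_1$.

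Next I would fix how $a_1,b_1$ meet the far side of $C$. The forks $(v_1,b_1u_2,v_2v_3,a_1,v_5)$ and $(v_1,b_1u_2,v_5v_4,a_1,v_2)$ each have $a_1\nsim u_2$ disarming the dangerous edge at the leaf $a_1$, so their only possible extra edges sit at $v_3$, respectively $v_4$; avoiding them forces $v_3\sim\{a_1,b_1\}$ and $v_4\sim\{a_1,b_1\}$. Since $v_3\sim v_4$, triangle-freeness forbids any single vertex of $\{a_1,b_1\}$ from meeting both $v_3$ and $v_4$, and forbids $a_1,b_1$ from sharing a neighbour among $v_3,v_4$; so, up to the reflection of $C$ fixing $v_1$ (which swaps $v_3\leftrightarrow v_4$ and leaves the rest of the configuration intact), we may assume $a_1\sim v_3$ and $b_1\sim v_4$.

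The remaining step---contradicting this final configuration---is the main obstacle, because the four vertices $a_1,b_1,u_1,u_2$ carrying the structure may each have degree exactly $3$ ($G$ is only guaranteed minimum degree $3$, by Lemma~\ref{lem:no-2-cut}), so a fork cannot be centred at them, while every naive fork centred at the two guaranteed degree-$4$ vertices $v_1$ and $b_1$ is now destroyed by one of the many known edges ($v_1v_2,v_1v_5,v_1a_1,v_1b_1$, the cycle edges, and $a_1v_3,b_1v_4$). My plan here is to introduce a neighbour $c\in N(u_2)\setminus\{b_1\}$ (which exists by minimum degree $3$ and satisfies $c\notin V(C)\cup\{v_1,u_1,a_1\}$) and examine the fork $(b_1,u_2c,v_4v_3,u_1,v_1)$: its vertices are distinct and its only possible extra edges join $c$ to one of $v_3,v_4,u_1,v_1$, so such a $c$ produces a fork unless $N(u_2)\setminus\{b_1\}\subseteq N(\{v_1,u_1,v_3,v_4\})$. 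I expect to finish either by producing a fork outright from a well-chosen $c$, or, when all of them are blocked, by the colouring strategy used for Lemma~\ref{lem:u1-u2}: three-colour $G-u_2$ by minimality and show that the forced colour pattern on this restricted neighbourhood either extends to $u_2$ or exposes a fork. Checking that one of these two outcomes always occurs, over the possible adjacency patterns of $N(u_2)$, is the technical heart of the argument.
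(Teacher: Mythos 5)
Your opening moves are correct and coincide exactly with the paper's: the symmetry reduction to proving only $a_1\sim u_2$, the fork $(v_1,a_1u_1,b_1u_2,v_2,v_5)$ forcing $b_1\sim u_1$, the two forks forcing $v_3\sim\{a_1,b_1\}$ and $v_4\sim\{a_1,b_1\}$, and the matching conclusion that (up to the reflection of $C$ fixing $v_1$) one of $a_1,b_1$ is adjacent to $v_3$ and the other to $v_4$. But from that point on you have only a plan, and you say so yourself: you propose to pick $c\in N(u_2)\setminus\{b_1\}$, observe that the fork $(b_1,u_2c,v_4v_3,u_1,v_1)$ forces $c\sim\{v_1,u_1,v_3,v_4\}$, and then ``finish either by producing a fork outright or by the colouring strategy,'' deferring the verification. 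That deferred verification is not a routine check --- it is the bulk of the paper's proof. In particular, the ``well-chosen $c$'' branch cannot close the argument on its own: the paper's analysis (with $b_2\in N(u_2)\setminus N(u_1)$, which exists by Lemma~\ref{lem:no_neighborhood_containment}) shows that such a $c$ genuinely exists and satisfies $c\sim v_3$ and $c\sim v_5$, so no fork arises there and you are forced into the global colouring argument.

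That colouring argument in turn requires structural facts you have not established: the paper partitions $N(C)$ into four explicitly described sets $A_1,B_1,B_2,Z$ (Claim~\ref{cl:first-neighbor2}), shows $N^2(C)\setminus\{u_1,u_2\}\subseteq N(Z)\cap N(u_1)\cap N(u_2)$ (Claim~\ref{cl:second-neighbor2}), proves $N^3(C)=\emptyset$, and only then exhibits a proper $3$-colouring of all of $G$ --- whose validity additionally depends on $N(Z)\cap N^2(C)$ being independent, which is deduced from Lemma~\ref{lem:u1-u2}. None of this is present or even sketched in enough detail to be checkable. Note also that the natural target of the colouring is a $3$-colouring of $G$ itself (contradicting $\chi(G)\ge 4$), not an extension of a colouring of $G-u_2$ as you suggest; the minimality of $G$ is not what drives this step. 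A minor additional imprecision: that $a_1$ and $b_1$ cannot share a neighbour in $\{v_3,v_4\}$ does not follow from triangle-freeness alone (since $a_1\nsim b_1$), but from combining triangle-freeness with the requirement that both $v_3$ and $v_4$ have a neighbour in $\{a_1,b_1\}$; your conclusion there is nonetheless correct. Overall: the approach is the right one and matches the paper's, but the proof is incomplete at exactly the point where the real work begins.
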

\setcounter{claim}{0}

\begin{proof}
Let $C=v_1v_2v_3v_4v_5v_1$ with $v_1=v$. Suppose for a contradiction that $a_1\not\sim u_2$ (the case $b_1\not\sim u_1$ is symmetric).
Then $u_1 \sim b_1$ to avoid the fork $(v_1, a_1 u_1, b_1 u_2, v_2, v_5)$. Moreover,  $v_3 \sim \{a_1, b_1\}$ to avoid the fork $(v_1, v_2 v_3, b_1 u_2, v_5, a_1)$, and  $v_4 \sim \{a_1, b_1\}$ to avoid the fork $(v_1,v_5v_4,b_1u_2, v_2, a_1)$. Since $G$ is triangle-free, we may assume by symmetry that $v_3 \sim b_1$ and $v_4 \sim a_1$. By Lemma \ref{lem:no_neighborhood_containment}, there exists $b_2 \in N(u_2)\backslash N(u_1)$. Now, the Mixed Algorithm applied to $C \cup \{a_1, b_1, u_1, u_2\}$ obtains a contradiction.
\end{proof}

We are now ready to prove the main result of this section.

\setcounter{claim}{0}

\begin{lemma}\label{lem:no-2-converge-1}
Let $C$ be any $5$-cycle in $G$ and let $u_1, u_2 \in N^2(C)$ ($u_1\neq u_2$) and $v\in V(C)$. Then $G$ does not contain two independent paths of length 2 from $v$ to $u_1,u_2$, respectively. 
\end{lemma}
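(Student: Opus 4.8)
The plan is to assume for contradiction that such paths exist, write them as $v a_1 u_1$ and $v a_2 u_2$, and record the constraints forced by independence: $a_1 \neq a_2$, $a_1 \neq u_2$, $a_2 \neq u_1$, and (since $u_1, u_2 \in N^2(C)$ are nonadjacent to every vertex of $C$) $a_1, a_2 \notin V(C)$ and $v \nsim u_1, u_2$. In particular $v \in N^2(u_1) \cap N^2(u_2) \cap V(C)$, witnessed by $a_1$ and $a_2$. I would then split into cases according to the relationship between $u_1$ and $u_2$.

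If $u_1 \neq u_2$ and $u_1 \sim u_2$, the conclusion is immediate from Lemma~\ref{lem:u1-u2}: that lemma gives $N^2(u_1) \cap N^2(u_2) \cap V(C) = \emptyset$, contradicting the fact that $v$ lies in this set. The remaining cases should all reduce to a single dense configuration. If $u_1 \neq u_2$ and $u_1 \nsim u_2$, I apply Lemma~\ref{lem:u1-u2-complete} (with $v$ as the common endpoint and $b_1 = a_2$) to obtain $a_1 \sim u_2$ and $a_2 \sim u_1$; together with the original edges $a_1 \sim u_1$, $a_2 \sim u_2$, $a_1 \sim v$, $a_2 \sim v$, this makes $\{a_1, a_2\}$ complete to $\{v, u_1, u_2\}$. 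Triangle-freeness forces $a_1 \nsim a_2$ and $\{v, u_1, u_2\}$ independent, so $G$ contains an induced $K_{2,3}$ with parts $\{a_1, a_2\}$ and $\{v, u_1, u_2\}$. If instead $u_1 = u_2 =: u$, then $a_1, a_2$ are already two common neighbors of $v$ and $u$, yielding the analogous (smaller) configuration in which $\{a_1, a_2\}$ is complete to $\{v, u\}$.

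The final step, and the main obstacle, is to derive a contradiction from this dense configuration. Writing $C = v_1v_2v_3v_4v_5v_1$ with $v = v_1$, the vertex $v_1$ has the four neighbors $v_2, v_5, a_1, a_2$ and so is a natural fork center, while Lemma~\ref{lem:no_neighborhood_containment} and $3$-connectivity (Lemma~\ref{lem:no-2-cut}) supply private neighbors such as $s \in N(a_1) \setminus N(a_2)$ and $p \in N(u_1) \setminus N(u_2)$ lying outside the $K_{2,3}$. The difficulty is that the configuration is adjacency-dense: because $a_2$ is adjacent to each of $v_1, u_1, u_2$, it can never play the role of a fork leaf when the center or a path-end of the fork lies in $\{v_1, u_1, u_2\}$, and symmetrically for $a_1$; this blocks every fork that stays inside the $K_{2,3}$.

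One must therefore build an induced fork that reaches outside the $K_{2,3}$, using the vertices $v_2, v_3$ on $C$ together with the private neighbors $s$, $p$ (and $q \in N(u_2) \setminus N(u_1)$), discharging the incidental adjacencies by the same fork-avoidance style already used in Lemmas~\ref{lem:u1-u2} and~\ref{lem:u1-u2-complete}; alternatively, one invokes minimality to $3$-color a subgraph such as $G - u_1$ and argues that the rigid neighborhood structure always frees a color for the deleted vertex, contradicting $\chi(G) \geq 4$. I expect this degree- and adjacency-based sub-case analysis, rather than the clean reduction in the earlier paragraphs, to be the bulk of the argument.
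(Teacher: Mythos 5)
There is a genuine gap. Your first two paragraphs reproduce exactly the paper's opening moves: invoking Lemma~\ref{lem:u1-u2} to rule out $u_1\sim u_2$ and Lemma~\ref{lem:u1-u2-complete} to get $a_1\sim u_2$ and $b_1\sim u_1$ (your $a_2$ is the paper's $b_1$), producing the $K_{2,3}$ on $\{a_1,b_1\}$ and $\{v,u_1,u_2\}$. But that is two lines of the paper's proof; everything after it, which you explicitly defer as ``the main obstacle'' and ``the bulk of the argument,'' is the actual content of the lemma, and you do not supply it. The paper needs roughly two pages here: it introduces private neighbors $x_i\in N(u_i)\setminus N(u_{3-i})$ and $c_1\in N(a_1)\setminus N(b_1)$, $c_2\in N(b_1)\setminus N(a_1)$, proves three claims pinning down their attachments to $C$ (they avoid $v_3,v_4$, the sets $N(x_i)\cap N(c_j)\cap V(C)$ are empty, and each $c_i$ meets exactly one of $v_2,v_5$), and then splits into two cases, each resolved by $3$-coloring $G-v_1$ or $G-a_1$ and a delicate recoloring/fork analysis. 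Gesturing at ``fork-avoidance in the style of the earlier lemmas'' or ``minimality frees a color'' does not substitute for this; in particular, nothing you have written rules out the configuration, and the difficulty you correctly identify (that $a_1,b_1$ can never serve as fork leaves against centers in $\{v,u_1,u_2\}$) is precisely why the easy attempts fail and the long argument is needed.

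A secondary problem: your case $u_1=u_2$ cannot be closed by contradiction at all. Two common neighbors $a_1,a_2$ of $v$ and a single vertex $u\in N^2(C)$ form a $4$-cycle that is \emph{not} forbidden in $G$; Lemma~\ref{lem:diamond-2-attachment} later studies exactly this configuration (for a good $5$-cycle) and derives only structural restrictions, not a contradiction. The lemma is implicitly about distinct $u_1,u_2$ (as in Lemma~\ref{lem:u1-u2}), and attempting to also kill the degenerate case would send you down a false path.
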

\begin{proof}
Let $C=v_1v_2v_3v_4v_5v_1$ and without loss of generality suppose $G$ has independent paths $v_1 a_1 u_1$ and $v_1 b_1 u_2$. By Lemma \ref{lem:u1-u2}, $u_1\nsim u_2$.
By Lemma \ref{lem:u1-u2-complete}, $a_1\sim u_2$ and $b_1\sim u_1$.
By Lemma \ref{lem:no_neighborhood_containment}, 
let $x_1 \in N(u_1)\backslash N(u_2)$ and $x_2 \in N(u_2) \backslash N(u_1)$. 
Now, the Structure Algorithm applied to $C \cup \{a_1, b_1, u_1, u_2, x_1, x_2\}$ obtains the following structure: $\{a_1, b_1\} \nsim \{v_3, v_4\}$, there exist new vertices $c_1 \in N(a_1) \setminus N(b_1)$ and $c_2 \in N(b_1) \setminus N(a_1)$, $\{c_1, c_2\} \nsim \{v_3, v_4\}$, and $c_1 \nsim c_2$. Let $H = C \cup \{a_1, b_1, u_1, u_2, c_1, c_2, x_1, x_2\}$. Note that $c_1 \sim \{v_2, v_5\}$ to avoid the fork $(v_1, a_1c_1, v_2v_3, v_5, b_1)$, and by symmetry, so is $c_2$. In the case that $c_1$ is adjacent to both $v_2$ and $v_5$, then the Mixed Algorithm applied to $H$ obtains a contradiction, so $c_1$ is adjacent to exactly one vertex in $\{v_2, v_5\}$, and by symmetry so is $c_2$. Without loss of generality, $c_1 \sim v_5$ and $c_1 \nsim v_2$. Now, $c_2 \sim v_2$ or $c_2 \sim v_5$, and in either case, the Mixed Algorithm applied to $H$ obtains a contradiction.
\end{proof}

\section{Properties about good $5$-cycles}\label{sec:good_cycle}
Recall that a $5$-cycle $C$ in $G$ is a good $5$-cycle if $|N^{\geq 2}(C)|$ is minimized. In this section, we prove a few useful properties about the structure of $G$ surrounding a good $5$-cycle $C$. In particular, we will show that no vertex of $G - C$ has two neighbors on $C$. We first need a technical lemma on the second neighborhood of a good $5$-cycle. 
\setcounter{claim}{0}

\begin{lemma}\label{lem:2nd-neighborhood-independent}
Let $C$ be a good 5-cycle in $G$. If $N^3(C)=\emptyset$ then $E(G[N^2(C)])\ne \emptyset$.  
\end{lemma}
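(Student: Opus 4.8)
The plan is to argue by contradiction. Suppose $C$ is a good $5$-cycle with $N^3(C)=\emptyset$ but $E(G[N^2(C)])=\emptyset$, so that $N^2(C)$ is an independent set. Because $N^3(C)=\emptyset$, we have $V(G)=V(C)\cup N(C)\cup N^2(C)$; and since $N^2(C)$ is independent while being anticomplete to $V(C)$ by definition, every vertex $u\in N^2(C)$ satisfies $N(u)\subseteq N(C)$. My goal is then to exhibit a proper $3$-coloring of $G$, contradicting $\chi(G)\ge 4$. This route has the advantage of simultaneously disposing of the degenerate possibility $N^2(C)=\emptyset$, which would otherwise make the statement vacuously false, since in that case $V(G)=V(C)\cup N(C)$ and the same coloring argument applies.

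First I would record the structural constraints that an edgeless $N^2(C)$ forces. The crucial input is Lemma~\ref{lem:no-2-converge-1}: for each fixed $v\in V(C)$, $G$ contains no two independent length-$2$ paths from $v$ to two distinct vertices of $N^2(C)$. Read together with triangle-freeness, this severely restricts how the vertices of $N^2(C)$ attach through $N(C)$ back to the cycle, and in particular it limits how many colors can be forced onto $N(u)$ for a given $u\in N^2(C)$. I would combine this with Lemma~\ref{lem:no_neighborhood_containment} and repeated fork-avoidance (exhibiting a forbidden fork whenever the local attachment pattern is too rich, exactly as in the claims of Section~\ref{sec:independent_paths}) to pin down the possible neighborhoods $N(u)$ and the adjacencies inside $N(C)$. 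The good-cycle hypothesis enters here as well: since $C$ minimizes $|N^{\geq 2}(C)|=|N^2(C)|$, any relocation of the $5$-cycle that would shrink the second neighborhood is forbidden, which keeps $N(C)$ and $N^2(C)$ under control.

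The coloring itself I would build in layers. By the minimality of $G$, the proper subgraph $G-N^2(C)=G[V(C)\cup N(C)]$ admits a $3$-coloring $\sigma$. Since $N^2(C)$ is independent, $\sigma$ extends to all of $G$ as soon as every $u\in N^2(C)$ satisfies $|\sigma(N(u))|\le 2$; the only obstruction is the set of \emph{bad} vertices $u$ with $\sigma(N(u))=[3]$. I would then modify $\sigma$ on $V(C)\cup N(C)$ — by swapping the color of an offending neighbor of a bad vertex, or by a Kempe-type recoloring — so as to eliminate all bad vertices at once, using the sparsity of the distance-$2$ attachments guaranteed by Lemma~\ref{lem:no-2-converge-1} to ensure that these local recolorings do not create new bad vertices elsewhere. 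Fork-freeness is invoked throughout to forbid the dense configurations that would make such a recoloring fail.

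The main obstacle I anticipate is precisely this simultaneous extension step: producing a single $3$-coloring of $V(C)\cup N(C)$ that leaves a free color at every vertex of the independent set $N^2(C)$ at once. The delicate interactions are those where recoloring a vertex of $N(C)$ to rescue one $u\in N^2(C)$ threatens another $u'$ sharing a neighbor in $N(C)$; this is where the structural lemmas must do the real work, and where the case analysis — organized so that fork-freeness rules out each troublesome adjacency pattern — becomes the heart of the proof.
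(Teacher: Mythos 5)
Your overall strategy matches the paper's in outline: assume $E(G[N^2(C)])=\emptyset$, note that then every $u\in N^2(C)$ has $N(u)\subseteq N(C)$ (since $N^3(C)=\emptyset$), and aim to produce a $3$-coloring of $G$ by first coloring $G[V(C)\cup N(C)]$ and then extending over the independent set $N^2(C)$. You are also right that the case $N^2(C)=\emptyset$ must be refuted rather than dismissed, and the paper does exactly that in its final Case~1. However, your proposal has a genuine gap at precisely the point you flag as ``the main obstacle'': you never actually produce a coloring of $V(C)\cup N(C)$ under which every vertex of $N^2(C)$ sees at most two colors, you only assert that Kempe-type repairs guided by Lemma~\ref{lem:no-2-converge-1} ``must do the real work.'' That step is the entire content of the lemma, and deferring it leaves nothing proved. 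There is also a concrete reason to doubt that your repair scheme, as described, can be pushed through: starting from an arbitrary $3$-coloring of $G-N^2(C)$ supplied by minimality gives you no control over how colors are distributed among the attachment sets, and local swaps to rescue one bad vertex of $N^2(C)$ can cascade through shared neighbors in $N(C)$ without an obvious terminating measure.

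The paper sidesteps this by never invoking an arbitrary coloring from minimality inside this proof. Instead it partitions $N(C)$ into the sets $A_i$ (private neighbors of $v_i$) and $B_i$ (common neighbors of $v_{i-1}$ and $v_{i+1}$), and proves a chain of structural claims by direct fork-hunting: each $z\in N^2(C)$ meeting $A_i$ (resp.\ $B_i$) is \emph{complete} to $B_{i-1}\cup A_i\cup B_{i+1}$ (resp.\ $A_{i-1}\cup B_i\cup A_{i+1}$); $A_i\nsim A_{i+2}$ for all $i$; and $A_i\neq\emptyset$ for all $i$. Only after this rigidity is in place does it write down \emph{explicit} colorings of $G[C\cup N(C)]$ tailored to the partition (e.g.\ $\sigma(N(v_2))=\{1\}$, $\sigma(N(v_3))=\{2\}$, etc.), and the completeness claims guarantee that any vertex of $N^2(C)$ seeing all three colors forces a fork or a further emptiness conclusion ($B_i=\emptyset$), leading eventually to a coloring that extends greedily. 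Notably, Lemma~\ref{lem:no-2-converge-1} plays essentially no role in the paper's argument here; the work is done by the completeness claims, which your sketch does not anticipate. To complete your proof you would need to either establish those claims or find a substitute mechanism that controls all of $N(C)$ simultaneously; as written, the proposal is a plan rather than a proof.
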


\begin{proof}
Suppose $N^3(C)=\emptyset$ and $E(G[N^2(C)])=\emptyset$. For $i\in [5]$, let $A_i = N(C)\setminus \cup_{j\in [5]\setminus \{i\}}N(v_j)$ and $B_i = N(v_{i-1})\cap N(v_{i+1})\cap N(C)$, where subscripts are taken modulo $5$. Then $N(C) = \cup_{i\in [5]} (A_i\cup B_i)$.

\begin{claim}\label{z_complete_to_stuff} For any $z \in N^2(C)$ and any $i\in [5]$, if $z \sim A_i$ then $z$ is complete to $B_{i-1}\cup A_i\cup B_{i+1}$, and if $z \sim B_i$ then $z$ is complete to $A_{i-1}\cup B_i\cup A_{i+1}$, where subscripts are taken modulo $5$. \end{claim}

\begin{proof}
Suppose $z \sim a_i$ for some $a_i\in A_i$ and let $c \in B_{i-1}\cup (A_i\setminus \{a_i\}) \cup B_{i+1}$ be arbitrary. Then, $c \sim z$ to avoid the fork $(v_i, a_iz, v_{i+1}v_{i+2}, v_{i-1}, c)$ (when $c\in B_{i-1}$) or $(v_i, a_iz, v_{i-1}v_{i-2}, v_{i+1}, c)$ (when $c \in A_i\cup B_{i+1}$). Thus,  $z$ is complete to $B_{i-1}\cup A_i\cup B_{i+1}$. 

Now suppose $z \sim b_i$ for some $b_i\in B_i$. Let $c \in A_{i-1}\cup B_i \cup A_{i+1}$ be arbitrary. Then, $c \sim z$ to avoid the fork $(v_{i-1}, b_iz, v_{i-2}v_{i-3}, v_{i}, c)$ (when $c\in A_{i-1}$) or $(v_{i+1}, b_iz, v_{i+2}v_{i+3}, v_{i}, c)$ (when $c\in B_i\cup A_{i+1})$. So $z$ is complete to $A_{i-1}\cup B_i\cup A_{i+1}$.
\end{proof}

\begin{claim}\label{A-Complete}
For any $i\in [5]$, if $A_i\sim A_{i+2}$ then $B_{i+1}=\emptyset$ and $A_i$ is complete to $A_{i+2}$. 
\end{claim}
\begin{proof}
Suppose $A_i\sim A_{i+2}$. If $A_i$ is not complete to $A_{i+2}$ then, by symmetry,  let $a_i,a_i' \in A_i$ and $a_{i+2} \in A_{i+2}$ such that $a_i \sim a_{i+2}$ and $a_i'\not\sim a_{i+2}$; now $(v_i, a_ia_{i+2}, v_{i-1}v_{i-2}, v_{i+1}, a_i')$ is a fork in $G$, a contradiction. 
Now if there exists $b_{i+1}\in B_{i+1}$ then, for any $a_i\in A_i$ and $a_{i+2}\in A_{i+2}$,  $(v_i, a_ia_{i+2}, v_{i-1}v_{i-2}, v_{i+1}, b_{i+1})$ is a fork in $G$, a contradiction. 
\end{proof}
   
   \begin{claim}\label{A4A1A3}
  For any $i\in [5]$ with $A_i\sim A_{i+2}$, $A_i\nsim A_{i-2}$ and $A_{i+2}\nsim A_{i+4}$. 
  \end{claim}
  \begin{proof}
  For, suppose the claim fails. By symmetry, we may assume  $A_1\sim A_3$ and $A_1\sim A_4$. Then by Claim \ref{A-Complete}, $B_2=B_5=\emptyset$ and $A_1$ is complete to $A_3\cup A_4$. Note that $A_1 \nsim A_5$; for, otherwise, letting $a_i \in A_i$ for $i\in [5]\setminus \{2\}$ with $a_1\sim a_5$, we see that $(a_1, a_4v_4, v_1v_2, a_5, a_3)$ would be a fork in $G$. Similarly, $A_1 \nsim A_2$. Next, $A_4$ is complete to $B_4$; for
  letting $b_4\in B_4$ and $a_i\in A_i$ for $i=3,4$ with $b_4\nsim a_4$, we see that $(v_3,  v_2v_1, v_4a_4, b_4, a_3)$ would be a fork in $G$. Similarly, $A_3$ is complete to $B_3$. Thus, $A_1\nsim B_3\cup B_4$ as $G$ is triangle-free. 
    
    Define $\sigma$ to be the $3$-coloring of $G[C \cup N(C)]$ such that  $\sigma(N(v_2))=\{1\}$, $\sigma((N(v_5) \setminus N(v_2))\cup A_1\cup \{v_2\})=\{2\}$,  and $\sigma(A_3\cup A_4\cup \{v_5\})=\{3\}$. Now $N^2(C) \ne \emptyset$, or else $\sigma$ would be a $3$-coloring of $G$. Let $Z:=\{z\in N^2(C): |\sigma(N(z)|=3\}$. Then $Z\ne \emptyset$; or else,  since $E(G[N^2(C)])=\emptyset$ and $N^3(C)=\emptyset$, we can color $N^2(C)$ greedily to extend $\sigma$ to a $3$-coloring of $G$, a contradiction. 

    Now let $z\in Z$ be arbitrary. Let $a,b,c\in N(Z)$ such that $\sigma(a)=1$, $\sigma(b)=2$, and $\sigma(c)=3$. Then $a\in N(v_2)$, $b\in (N(v_5)\setminus N(v_2))\cup A_1$, and $c\in A_3\cup A_4$. Note that $b\notin A_1$ as $c$ is complete to $A_3 \cup A_4$ and $G$ is triangle-free. Moreover, $b \notin B_4$, or else $(b, a_4a_1, v_3v_2, v_5, z)$ is a fork in $G$, a contradiction. Hence, $b \in A_5$, and by Claim \ref{z_complete_to_stuff}, $B_4 = \emptyset$.
    
  Since $b\in N(z)\cap \sigma^{-1}(2)$ is arbitrary, it follows from Claim \ref{z_complete_to_stuff} that $\sigma^{-1}(2)\cap N(Z)=A_5$. If $A_5\nsim N(v_2) \cap N(C)$ then 
    we obtain a contradiction by modifying $\sigma$ to a $3$-coloring of $G$: recolor $A_5$ with $1$, color $Z$ with $2$, and color $N^2(C)\setminus Z$ greedily.
    
    Hence, $A_5\sim N(v_2) \cap N(C)$. Since $z, a, b,c$ are chosen arbitrarily, we may assume 
    that $b \sim a'$ for some $a' \in N(v_2) \cap N(C)$. 
    Let $a_i\in A_i$ for $i\in \{1,3,4\}$.  Note that $z\sim a$ but $z\not\sim a'$ (otherwise $zba'z$ is a triangle). By Claim \ref{z_complete_to_stuff}, $a' \in B_3$. Moreover, $a'\sim a_3$ as $B_3$ is complete to $A_3$. Then $b\sim a_4$ to avoid the fork $(a', v_4 a_4, v_2 v_1, b, a_3)$. Now $(b, a' v_2, a_4 a_1, z, v_5)$ is a fork, a contradiction.
    \end{proof}
    
\begin{claim}\label{Ai-nsim-Ai+2}
$A_i \nsim A_{i+2}$ for all $i \in [5]$.
\end{claim}
\begin{proof}
Suppose otherwise. Without loss of generality we may assume that $A_4\sim A_1$. Then by Claim \ref{A4A1A3}, $A_1 \nsim A_3$ and $A_4 \nsim A_2$.
By Claim \ref{A-Complete}, $B_5=\emptyset$ and $A_4$ is complete to $A_1$. Define $\sigma$ to be the $3$-coloring of $G[C \cup N(C)]$ such that  $\sigma(N(v_5))=\{1\}$, $\sigma(\{v_2\}\cup A_1\cup A_3\cup B_2)=\{2\}$, and 
  $\sigma(N(v_4) \cup A_2)=\{3\}$. Let $Z=\{z\in N^2(C): |\sigma(N(z))|=3\}$; then $Z \ne \emptyset$, or else we could extend $\sigma$ greedily to a $3$-coloring of $G$. Let $z\in Z$ be arbitrary and let $c_5 \in N(z)\cap \sigma^{-1}(1)$; so  $c_5\in N(v_5)$.

  First, suppose $z \nsim  A_1\cup A_4$.  Let $b \in N(z) \cap \sigma^{-1}(2)$ and $c \in N(z) \cap \sigma^{-1}(3)$; then, by Claim \ref{z_complete_to_stuff}, $b \in A_3$ and $B_2 = \emptyset$, and similarly $c \in A_2$ and $B_3 = \emptyset$. Hence, $z$ is complete to $A_2\cup A_3$. By Claim \ref{z_complete_to_stuff} and since $1\in \sigma(N(z))$, we have that $z$ is complete to $B_1\cup B_4 \cup A_5$. Since $G$ is triangle-free,  $A_2\cup A_3\nsim A_5 \cup B_1\cup B_4$. So 
  $1 \notin \sigma(N(A_3))\cup \sigma(N(A_2))$. But now we derive a contradiction by modifying $\sigma$ to obtain a $3$-coloring of $G$ by recoloring all vertices in $A_2\cup A_3$ with $1$, and greedily coloring all vertices in $N^2(C)$.

  Thus, $z \sim A_1\cup A_4$. By symmetry, we may assume that $z\sim A_1$. So $z$ is complete to $A_1\cup B_2$ by Claim \ref{z_complete_to_stuff}. Then $z\nsim A_4$, as $G$ is triangle-free. Moreover, $z\nsim B_3$ to avoid contradicting Claim \ref{z_complete_to_stuff}. So $z\sim A_2$ as $\sigma^{-1}(3)\cap N(z)\ne \emptyset$. Then $z$ is complete to $B_1\cup A_2\cup B_3$ by Claim \ref{z_complete_to_stuff}. This implies $B_3=\emptyset$.  Furthermore,  $B_2 = \emptyset$, or else, for any $a_2\in A_2$ and $b_2 \in B_2$, $(z, a_2v_2, c_5v_5, a_1, b_2)$ (when $c_5 \nsim v_2$) or $(z, b_2v_3, c_5v_5, a_1, a_2)$ (when $c_5 \sim v_2$) is a fork in $G$, a contradiction. Also, $B_1 = \emptyset$; or else for any $b_1 \in B_1$, $b_1 \sim z$ by Claim \ref{z_complete_to_stuff}, and hence $(v_2, v_3v_4, v_1a_1, b_1, a_2)$ is a fork in $G$, a contradiction.

  Note that  $1 \notin \sigma(N(A_2))$. For, let $a_2\in A_2$ and $a_2' \in N(a_2)\cap \sigma^{-1}(1)$. Then $a_2'\ne c_5$ and $a_2'\nsim z$, as $z$ is complete to $A_2$ and $G$ is triangle-free. Now $a_2' \sim v_5$ by definition of $\sigma$; so $(v_5, c_5z, v_1v_2, a_2', v_4)$ is a fork in $G$. 
  
  Therefore, we can modify $\sigma$ to a $3$-coloring $\tau$ of $G[C \cup N(C)]$ by letting $\tau(v) = \sigma(v)$ for $v \notin A_2$ and $\tau(A_2) = \{1\}$. Let $Y=\{y\in N^2(C): |\tau(y)|=3\}$. Then $Y\ne \emptyset$ as otherwise $\tau$ can be extended to give a $3$-coloring of $G$. Let $y\in Y$ be arbitrary. Recall that $B_1= B_2 = B_3 = B_5 = \emptyset$. Thus $y \sim A_4=\tau^{-1}(3)\cap N(C)$; hence $y\sim A_3$ since $2\in \sigma(N(y))$ and $y \nsim A_1$. It follows by Claim \ref{z_complete_to_stuff} that $y$ is complete to $A_4\cup A_3\cup B_4$. Now, $B_4 = \emptyset$; or else for any $b_4\in B_4$, $(v_3, v_4a_4, v_2v_1, b_4, a_3)$ would be a fork in $G$. Hence, we have shown that $B_i = \emptyset$ for all $i \in [5]$. Further note that $N(A_4) \cap \tau^{-1}(1)=\{v_4\}$; for otherwise, there exist $c_4 \in A_4$ and $c_4' \in (N(c_4)\backslash V(C)) \cap \tau^{-1}(1)$, and  $(c_4, v_4v_3, a_1v_1, c_4', y)$ is a fork in $G$. But then, we obtain a 3-coloring $\tau'$ of $G$ by letting $\tau'(v) = \tau(v)$ for $v \notin \{v_4\} \cup A_4 \cup N^2(C)$, and  $\tau'(v_4) = 2$, $\tau'(A_4) = \{1\}$, and $\tau'(N^2(C)) = \{3\}$, a contradiction.
\end{proof}     

   \begin{claim}\label{AiNotEmpty}
     $A_i \ne  \emptyset$ for all $i \in [5]$.
   \end{claim}
   \begin{proof}
   For, suppose by symmetry that $A_5=\emptyset$. Then $N^2(C) \nsim B_5$ by the minimality of $\left|N^{\geq 2}(C)\right|$. Let $\sigma$ be the $3$-coloring of $G[C \cup N(C)]$ such that $\sigma(N(v_2))=\{1\}$, $\sigma(N(v_3))=\{2\}$, and $\sigma(\{v_5\}\cup A_1\cup A_4\cup B_5)=\{3\}$. Let $Z:=\{z\in N^2(C): |\sigma(N(z))|=3\}$. Then $Z\ne \emptyset$, or else $\sigma$ can be extended greedily to a coloring of $G$. Let $z\in Z$ be arbitrary. Since $|\sigma(N(z))| = 3$, $z \sim N(v_2)$ and $z\sim N(v_3)$. Let $c_2 \in N(v_2)\cap N(z)$ and $c_3 \in N(v_3)\cap N(z)$. Moreover, $z\sim A_1\cup A_4$ since $Z\nsim B_5$. Note the symmetry between $A_1$ and $A_4$.

    We claim that if $z\sim A_4$, then $1\not\in \sigma(N(A_4))$.
    For, suppose $z\sim A_4$ (so $z$ is complete to $A_4$ by Claim \ref{z_complete_to_stuff}) and $1\in \sigma(N(A_4))$. Let $a_4\in A_4$ and $a_4'\in N(a_4)\cap \sigma^{-1}(1)$. Then $a_4' \in N(v_2)$ by definition and $a_4'\ne c_2$ as $G$ is triangle-free. Now $c_2 \sim v_4$ to avoid the fork $(v_2, c_2z, v_3v_4, a_4', v_1)$. This implies $c_2\in B_3$ and, hence, $z$ is complete  to $B_3$. If $A_3=\emptyset$ then for the 5-cycle $C' = v_5v_1v_2c_2v_4v_5$, $N^{\geq 2}(C') \subsetneq N^{\geq 2}(C)$ (since $A_3 = \emptyset$), contradicting the choice of $C$. So $A_3\ne \emptyset$. Since $c_3 \in N(v_3)\cap N(Z)$, $c_3 \in B_2\cup A_3\cup B_4$. By Claim \ref{z_complete_to_stuff}, $z$ is complete to $A_3$. Now let $a_3\in A_3$. Then, $(v_4, v_5v_1, v_3a_3, a_4, c_2)$ is a fork in $G$, a contradiction.

    Similarly, if $Z\sim A_1$, then $2\not\in \sigma(N(A_1))$. We can now modify $\sigma$ to obtain a $3$-coloring of $G$ by re-coloring $N(Z)\cap A_4$ to $1$, re-coloring $N(Z)\cap A_1$ to $2$, coloring $Z$ with $3$, and coloring $N^2(C)\backslash Z$ greedily.
  \end{proof}

  Now we are ready to complete the proof.
   Let $\sigma$ be the $3$-coloring of $G[C \cup N(C)]-A_5$ such that $\sigma(N(v_2))=\{1\}$, $\sigma(N(v_3))=\{2\}$, and $\sigma(\{v_5\}\cup A_1\cup A_4\cup B_5)=\{3\}$.
   
  \medskip          
        {\it Case} 1. $N^2(C) = \emptyset$.
        
        Let $A_5' = A_5\cap N(B_2)\cap N(B_3)$. 
        Extend $\sigma$ to a $3$-coloring of $G \setminus A_5'$ by letting $\sigma((A_5 \backslash A_5')\cap N(B_2))=\{1\}$ and $\sigma((A_5 \backslash A_5')\backslash N(B_2)) = \{2\}$.
        Then $A_5' \ne \emptyset$ as, otherwise, $\sigma$ would be a $3$-coloring of $G$. 
        Let $B_2' = B_2 \cap N(A_5')$ and $B_3' = B_3 \cap N(A_5')$. Since $G$ is triangle-free, $B_2'$ is anticomplete to $B_3'$, $N(B_2')\cap \sigma^{-1}(3) \subseteq A_4$, and $N(B_3')\cap \sigma^{-1}(3)\subseteq A_1$.  
        Let $B_2'' = B_2'\cap  N(A_4)$, $B_3'' = B_3' \cap N(A_1)$, and let $A_5'' = A_5 \cap  N(B_2'') \cap N(B_3'')$. 
        
        We further modify $\sigma$ to a $3$-coloring of $G \setminus A_5''$ by letting $\sigma(B_2' \setminus B_2'')=\sigma(B_3' \setminus B_3'')=\{3\}$,   $\sigma((A_5' \setminus A_5'')\cap N(B_2''))=\{1\}$, and $\sigma((A_5' \setminus A_5'')\backslash N(B_2'')) = \{2\}$. 
        Then $A_5'' \neq \emptyset$, or $\sigma$ would be a $3$-coloring of $G$. 
        Hence, there exists $a_5 \in A_5''$, $b_2 \in B_2''$, $b_3 \in B_3''$, $a_1 \in A_1$, and $a_4 \in A_4$, such that $a_5 \sim b_2$, $a_5 \sim b_3$, $b_2 \sim a_4$, and $b_3 \sim a_1$. Then $A_5'' \nsim \{a_1, a_4\}$ as $G$ is triangle-free. 

        Let $\tau$ be the $3$-coloring of $G \setminus B_5$ such that  $\tau(N(v_5))=\{1\}$, $\tau(\{v_5, v_2\}\cup B_2\cup A_1\cup A_3)=\{2\}$, and $\tau(\{v_3\}\cup B_3\cup A_2\cup A_4)=\{3\}$. Then, $B_5 \neq \emptyset$, or else $\tau$ would be a $3$-coloring of $G$. Let $b_5 \in B_5$. Then $b_5 \sim a_5$ to avoid the fork $(v_1, v_2v_3, v_5a_5, b_5, a_1)$. But now $(a_5,  b_2a_4, b_3a_1, b_5, v_5)$ is a fork in $G$, a contradiction. 
  \medskip

        {\it Case} 2:   $N^2(C) \neq \emptyset$. 

        First, we show that there exists $i \in [5]$ such that $B_i = B_{i+1}=\emptyset$. Suppose otherwise. Then, without loss of generality we may assume that $B_1$, $B_3$ and $B_5$ are all nonempty. Let $W = A_5 \cup B_1 \cup A_2 \cup B_3 \cup A_4 \cup B_5 \cup A_1$. Then $N^2(C)\subseteq N(W)$; otherwise for any $z\in N^2(C)\setminus N(W)$, we have $N(z)\subseteq N(v_3)$, contradicting Lemma \ref{lem:no_neighborhood_containment}. Thus, by Claim \ref{z_complete_to_stuff}, $N^2(C)$ is complete to $W$. Let $z\in N^2(C)$ and $a_i\in A_i$ for $i=1,2,4,5$. Then $(z, a_5v_5, a_2v_2, a_1, a_4)$ is a fork in $G$, a contradiction.

        Hence without loss of generality, assume $B_2 = B_3 = \emptyset$. Next we show $B_5=\emptyset$.  Suppose $B_5\neq \emptyset$. Let $\sigma$ be a $3$-coloring of $G[C \cup N(C)]$ such that $\sigma(N(v_2) \cup A_5)=\{1\}$, $\sigma(N(v_3))=\{2\}$, and $\sigma(\{v_5\}\cup B_5\cup A_1 \cup A_4)=\{3\}$. There exists $z \in N^2(C)$ with $\left|\sigma\left(N(z)\right)\right| = 3$, or else we can greedily color $N^2(C)$ to extend $\sigma$ to a $3$-coloring of $G$. Hence, there exists  $z\in N^2(C)\cap N(W')$, where $W'=A_4\cup B_5\cup A_1$. By Claim \ref{z_complete_to_stuff}, since $B_5\neq \emptyset$, $z$ is complete to $W'$. Let $a_i\in A_i$ for $i\in \{1,4\}$. Let $c\in N(z)\cap \sigma^{-1}(1)$ and  $c'\in N(z)\cap \sigma^{-1}(2)$. Now $(z, a_1v_1, a_4v_4, c, c')$ is a fork in $G$, a contradiction. 
          
        So $B_5 = \emptyset$. Then by symmetry and by the previous cases, either $B_1 \neq \emptyset$ and $B_4 \neq \emptyset$, or $B_1 = B_4 = \emptyset$.  Let $\sigma$ be a $3$-coloring of $G[C \cup N(C)]$ such that $\sigma(N(v_2) \cup A_5)=\{1\}$, $\sigma(N(v_3))=\{2\}$, and $\sigma(\{v_5\}\cup A_1 \cup A_4)=\{3\}$. Let  $z \in N^2(C)$ with $\left|\sigma\left(N(z)\right)\right| = 3$.
  
         Suppose $B_1 \neq \emptyset$ and $B_4 \neq \emptyset$. 
         Then $z\sim N(C)\cap \sigma^{-1}(1)\subseteq W :=A_3 \cup B_4 \cup A_5 \cup B_1 \cup A_2$. So by Claim \ref{z_complete_to_stuff}, $z$ is complete to $W$. Note that $z\sim N(C)\cap \sigma^{-1}(3)=A_1\cup A_4$; so let $a_i\in N(z)\cap A_i$ for some $i\in \{1,4\}$.   Now $(z, a_2v_2, a_5v_5, a_3, a_i)$ is a fork in $G$, a contradiction. 
            
        Now assume $B_1 = B_4 = \emptyset$. Since $|\sigma(N(z))|=3$, $|\{i: z \sim A_i\}|\ge 3$; so 
        $z \sim A_j$ and $z \sim A_{j+1}$ for some $j$. Without loss of generality, we may assume $z\sim A_5$ and $z\sim A_1$. However, we obtain a contradiction by a $3$-coloring $G$ as follows: coloring $N(v_3) \cup A_5 \cup A_1$ with $1$, coloring $N(v_2) \cup A_4$ with $2$, and coloring $\{v_5\} \cup N^2(C)$ with $3$. 
\end{proof}

\begin{lemma}\label{lem:no-degree2-both-sides}
Let $C$ be a good $5$-cycle in $G$. Then for any $x\in N(C)$, $|N(x) \cap V(C)|\le 1$ or $|N(x) \cap N^2(C)|\le 1$.
\end{lemma}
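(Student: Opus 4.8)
The plan is to argue by contradiction. Suppose some $x \in N(C)$ has both $|N(x)\cap V(C)|\ge 2$ and $|N(x)\cap N^2(C)|\ge 2$. Since $G$ is triangle-free, $N(x)\cap V(C)$ is an independent set of the (induced) $5$-cycle $C$, so it has size at most $\alpha(C_5)=2$; hence it equals $\{v_1,v_3\}$ after relabeling $C=v_1v_2v_3v_4v_5v_1$, and triangle-freeness also gives $x\not\sim v_2,v_4,v_5$. I then fix distinct $u_1,u_2\in N(x)\cap N^2(C)$; since $x$ is adjacent to both, $u_1\not\sim u_2$, as otherwise $xu_1u_2$ would be a triangle.

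The key step is to locate a single \emph{private} neighbor of $u_1$ with strong non-adjacency properties. By Lemma \ref{lem:no_neighborhood_containment}, $N(u_1)\not\subseteq N(u_2)$, so I can pick $w_1\in N(u_1)\setminus N(u_2)$. Then $w_1\ne x$ (because $x\in N(u_2)$), $w_1\not\sim u_2$, and $w_1\not\sim x$ (else $xu_1w_1$ is a triangle). Moreover $w_1\notin V(C)$ since $w_1\sim u_1\in N^2(C)$. The crucial non-adjacencies are $w_1\not\sim v_1$ and $w_1\not\sim v_3$: if, say, $w_1\sim v_1$, then $u_1w_1v_1$ and $u_2xv_1$ would be two independent paths of length $2$ from $v_1$ to $u_1$ and $u_2$, contradicting Lemma \ref{lem:no-2-converge-1}. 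Here independence holds because the internal vertices $w_1\ne x$ are distinct and neither lies on the other path (using $w_1\notin\{u_2,x,v_1\}$ and $x\notin\{u_1,w_1,v_1\}$).

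It remains to exhibit a fork centered at $x$. Since $G$ is triangle-free and $v_4\sim v_5$, the vertex $w_1$ cannot be adjacent to both $v_4$ and $v_5$. If $w_1\not\sim v_5$, then $(x,v_1v_5,u_1w_1,v_3,u_2)$ is an induced fork; if instead $w_1\not\sim v_4$, then $(x,v_3v_4,u_1w_1,v_1,u_2)$ is an induced fork. In either case the required non-edges are exactly those already verified for $w_1$, together with $x\not\sim v_4,v_5,w_1$, the cycle non-adjacencies $v_3\not\sim v_5$ and $v_1\not\sim v_4$, and the fact that $u_1,u_2\not\sim V(C)$ (as $u_1,u_2\in N^2(C)$). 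Thus the configuration is genuinely an induced fork, contradicting that $G$ is fork-free.

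The main obstacle is the middle step: one must realize that a single well-chosen neighbor $w_1$ of $u_1$ already suffices (no symmetric partner $w_2$ of $u_2$ is needed), and that its decisive non-adjacencies to $v_1,v_3$ come precisely from the ``no converging independent paths'' Lemma \ref{lem:no-2-converge-1}, with $x$ serving as the common middle vertex of the two length-$2$ paths. Once this is in place, the triangle-free dichotomy $w_1\not\sim v_4$ or $w_1\not\sim v_5$ mechanically chooses which of the two forks to display; notably, the goodness of $C$ does not seem to be needed in this argument, only that $C$ is a $5$-cycle.
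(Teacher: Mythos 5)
Your proof is correct and follows essentially the same route as the paper: after reducing to $x$ adjacent to two vertices at distance two on $C$, both arguments pick a private neighbor $w_1\in N(u_1)\setminus N(u_2)$, rule out $w_1$ being adjacent to $x$'s two $C$-neighbors via Lemma \ref{lem:no-2-converge-1}, and then exhibit one of the same two forks centered at $x$ (the paper phrases the final dichotomy as ``force $w_1$ adjacent to one far vertex, hence non-adjacent to the other,'' while you invoke triangle-freeness on the far edge directly, which is logically identical). Your closing observation that goodness of $C$ is not actually used is also consistent with the paper's proof.
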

\begin{proof}
Let $C=v_1v_2v_3v_4v_5v_1$. 
Suppose without loss of generality that there exists some $x \in N(C)$ such that $v_2, v_5 \in N(x)$ and $u_1, u_2 \in N(x) \cap N^2(C)$. By Lemma \ref{lem:no_neighborhood_containment}, there exists a vertex $y_1 \in N(u_1)\backslash N(u_2)$. By Lemma \ref{lem:no-2-converge-1}, $y_1 \not\in N(\{v_2, v_5\})$. Now $y_1 \sim v_3$ to avoid the fork $(x, v_2 v_3, u_1 y_1, v_5, u_2)$, but now $(x, v_5 v_4, u_1 y_1, u_2, v_2)$ is a fork in $G$, a contradiction.
\end{proof}

\begin{lemma}\label{lem:no-partial-diamond}
Let $C$ be a good $5$-cycle in $G$, and 
let $u_1 \in N^2(C)$ and $v\in V(C)$. Then, for any $a_1,b_1\in N(v)\setminus V(C)$, $|N(u_1)\cap \{a_1,b_1\}|\ne 1$.
\end{lemma}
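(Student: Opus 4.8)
The plan is to argue by contradiction: suppose some $u \in N^2(C)$, some $v = v_1 \in V(C)$, and some $a, b \in N(v_1)\setminus V(C)$ satisfy $u \sim a$ but $u \not\sim b$; write $C = v_1 v_2 v_3 v_4 v_5 v_1$. Since $G$ is triangle-free and $a,b \in N(v_1)$, we have $a \not\sim b$ and $a, b \not\sim \{v_2, v_5\}$, and $u \not\sim v_i$ for all $i$ because $d(u,C)=2$. The whole argument will exploit that $v_1$ has degree at least $4$ (its neighbours include $v_2, v_5, a, b$), so that $v_1$ can serve as the centre of a fork, while the two preceding lemmas force strong restrictions on the second neighbourhood.

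First I would extract two structural facts from the earlier lemmas. If $b$ had a neighbour $u_2 \in N^2(C)$ (necessarily $u_2 \ne u$, since $u \not\sim b$), then $u a v_1$ and $u_2 b v_1$ would be independent paths of length $2$ from $v_1$ to two distinct vertices of $N^2(C)$, contradicting Lemma \ref{lem:no-2-converge-1}; hence $N(b) \cap N^2(C) = \emptyset$. Similarly, if $u$ had a neighbour $u_2 \in N^2(C)$, then applying Lemma \ref{lem:no_broken_diamond} to the good cycle $C$ with $u_1 = u$, the edge $u_1 u_2$, the vertex $a_1 = a \in N(v_1)\cap N(u_1)$, and $b_1 = b \in N(v_1)\setminus V(C)$ would force $u \sim b$, a contradiction; hence $N(u)\cap N^2(C) = \emptyset$ as well.

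Next I would pin down how $a$ and $b$ attach to the far side of $C$. Consider the two candidate forks $(v_1, a u, v_2 v_3, b, v_5)$ and $(v_1, a u, v_5 v_4, b, v_2)$; using $u \not\sim v_i$, $u \not\sim b$, that $C$ is induced, and triangle-freeness at $v_1$, the only non-automatic non-adjacencies are at the ``far'' leaves, so the first is an induced fork unless $a \sim v_3$ or $b \sim v_3$, and the second is an induced fork unless $a \sim v_4$ or $b \sim v_4$. Triangle-freeness forbids either $a$ or $b$ from being adjacent to both $v_3$ and $v_4$, so up to the reflection $v_2 \leftrightarrow v_5$, $v_3 \leftrightarrow v_4$ the only surviving configuration is $a \sim v_4$ and $b \sim v_3$ (with $a \not\sim v_3$, $b \not\sim v_4$). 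In particular $C' = v_1 a v_4 v_3 b v_1$ is an induced $5$-cycle, and by Lemma \ref{lem:no-degree2-both-sides} (as $a$ has the two $C$-neighbours $v_1, v_4$) we also get $N(a)\cap N^2(C) = \{u\}$.

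Finally I would derive the contradiction by centring a fork at $v_1$. Since $G$ is $3$-connected (Lemma \ref{lem:no-2-cut}), $b$ has a third neighbour $b'$; because $N(b)\cap N^2(C)=\emptyset$ and the only $C$-neighbours of $b$ are $v_1, v_3$, this $b'$ lies in $N(C)$ and, by triangle-freeness, attaches to some $v_k$ with $k \in \{2,4,5\}$. The goal is to show that $(v_1, a u, b b', v_2, v_5)$ is an induced fork, which holds as soon as $b' \not\sim \{a, u, v_2, v_5\}$, the remaining non-adjacencies being automatic. I expect the hard part to be precisely this case analysis, ruling out $b' \sim v_2$, $b' \sim v_5$, $b' \sim a$, and $b' \sim u$ according to the value of $k$. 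Each case should be dispatched either by triangle-freeness (for instance $k=4$ immediately rules out $b'\sim v_5$ and $b'\sim a$), by exhibiting a different induced fork, or by invoking a neighbour $u' \in N(u)\setminus N(a)$ (which exists by Lemma \ref{lem:no_neighborhood_containment} and lies in $N(C)\cup N^3(C)$ by the second paragraph) to rebuild a configuration forbidden by Lemma \ref{lem:no-2-converge-1} or Lemma \ref{lem:no_broken_diamond}. This bookkeeping over the attachment of $b'$ (and, symmetrically, of a third neighbour of $u$) is where the real work lies; the reductions in the first three paragraphs are exactly what make it tractable.
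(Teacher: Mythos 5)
Your first three paragraphs are sound: the reduction to $a\sim v_4$, $b\sim v_3$ is exactly how the paper's proof begins, and the facts $N(b)\cap N^2(C)=\emptyset$ (via Lemma~\ref{lem:no-2-converge-1}), $N(u)\cap N^2(C)=\emptyset$ (via Lemma~\ref{lem:no_broken_diamond}), and $N(a)\cap N^2(C)=\{u\}$ (via Lemma~\ref{lem:no-degree2-both-sides}) are all correctly derived and correspond to facts the paper also establishes. But the fourth paragraph is a plan, not a proof, and it is precisely the part you defer (``this bookkeeping \ldots\ is where the real work lies'') that constitutes essentially the entire difficulty of the lemma. You have not ruled out the configurations in which your target fork $(v_1, au, bb', v_2, v_5)$ fails for every choice of $b'$ --- for instance when $d(b)=3$ and its unique third neighbour is adjacent to $v_5$ (or to $v_2$, or to $u$). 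These are not degenerate corner cases: they are the main body of the paper's argument.

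Concretely, the paper proceeds by taking a second vertex $u_2\in N^2(C)\setminus\{u\}$ (which exists by Lemmas~\ref{lem:no-2-cut} and~\ref{lem:2nd-neighborhood-independent}), showing $u_2$ can only attach to $C$ through $v_2$ or $v_5$, and then killing each case with tools that never appear in your sketch: comparisons of $|N^{\geq 2}(\cdot)|$ with auxiliary $5$-cycles such as $c_2v_2v_3v_4v_5c_2$ and $v_1b_1v_3v_4v_5v_1$ (this is where the \emph{goodness} of $C$ is used directly, not just through the earlier lemmas), $2$-cut arguments forcing $N^3(C)=\emptyset$, and an appeal to Lemma~\ref{lem:2nd-neighborhood-independent} that produces an edge in $N^2(C)$ contradicting Lemma~\ref{lem:no_broken_diamond}. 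None of these obstructed configurations is dispatched by a single fork centred at $v_1$, and nothing in your outline indicates how you would recover them; ``exhibiting a different induced fork'' or ``rebuilding a configuration forbidden by Lemma~\ref{lem:no-2-converge-1}'' is not enough of a specification to count as an argument. As it stands the proof has a genuine gap at its core case analysis.
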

\begin{proof}
Let $C=v_1v_2v_3v_4v_5v_1$. Suppose the assertion is false and assume without loss of generality that
there exist $a_1,b_1\in N(v_1)\setminus V(C)$ and $u_1\in N^2(C)$ such that $a_1 \sim u_1$ but $b_1 \not\sim u_1$. Note that $v_3 \sim \{a_1, b_1\}$ to avoid the fork $(v_1, v_2v_3, a_1 u_1, v_5, b_1)$ and, by symmetry, that $v_4 \sim \{a_1, b_1\}$. Since $G$ is triangle-free, we may assume by symmetry that $a_1 \sim v_4$ and $b_1 \sim v_3$. Now, the Mixed Algorithm applied to $C \cup \{a_1, b_1, u_1\}$ obtains a contradiction.
    \end{proof}

\setcounter{claim}{0}

\begin{lemma}\label{lem:diamond-2-attachment}
Let $C$ be a good $5$-cycle in $G$. 
Let $u_1 \in N^2(C)$ and $v\in V(C)$, and let $a_1, b_1 \in N(u_1) \cap N(v)$ be distinct. 
Then $\{a_1,b_1\}\nsim V(C)\setminus \{v\}$.
\end{lemma}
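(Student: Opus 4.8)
The plan is to argue by contradiction. Set $v=v_1$ and $C=v_1v_2v_3v_4v_5v_1$. Since $a_1,b_1\in N(u_1)\cap N(v_1)$ and $G$ is triangle-free, $\{v_1,a_1,u_1,b_1\}$ induces a $4$-cycle (a diamond), $a_1\not\sim b_1$, and $a_1,b_1\not\sim\{v_2,v_5\}$. Hence the only edges that could violate the conclusion join $a_1$ or $b_1$ to $v_3$ or $v_4$, and no single vertex is adjacent to both $v_3$ and $v_4$ (that would create a triangle on $v_3v_4$). Using the reflection of $C$ fixing $v_1$ (which swaps $v_2\leftrightarrow v_5$ and $v_3\leftrightarrow v_4$) together with the symmetry $a_1\leftrightarrow b_1$, I may assume for contradiction that $a_1\sim v_3$ (and therefore $a_1\not\sim v_4$).

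The key step is to exploit that $C$ is good. Because $a_1\sim v_3$ and $a_1\not\sim v_4$, the cycle $C_1:=v_1a_1v_3v_4v_5v_1$ is an induced $5$-cycle, and since $u_1\sim a_1\in V(C_1)$ while $u_1\in N^2(C)$, the vertex $u_1$ lies in $(V(C_1)\cup N(C_1))\setminus(V(C)\cup N(C))$. As $C$ minimizes $|N^{\geq 2}(C)|$, equivalently maximizes $|V(C)\cup N(C)|$, we have $|V(C_1)\cup N(C_1)|\le |V(C)\cup N(C)|$; combined with the previous observation this forces some $y\in (V(C)\cup N(C))\setminus(V(C_1)\cup N(C_1))$. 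Such a $y$ satisfies $y\notin V(C)$ and $y\not\sim\{v_1,v_3,v_4,v_5,a_1\}$, so, being within distance $1$ of $C$, it must satisfy $y\sim v_2$. In the generic situation $b_1\not\sim v_4$ and $y\not\sim b_1$, the $5$-tuple $(v_1,\,v_2y,\,v_5v_4,\,b_1,\,a_1)$ is then an induced fork: the four edges at the center are $v_1v_2,v_1v_5,v_1b_1,v_1a_1$, the two length-$2$ paths are $v_1v_2y$ and $v_1v_5v_4$, and every remaining pair among $\{v_2,y,v_5,v_4,b_1,a_1\}$ is a non-edge by triangle-freeness, by $C$ being induced, and by the defining properties of $y$. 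This contradiction rules out the generic case.

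The hard part will be the remaining degenerate configurations, namely $b_1\sim v_4$ and $y\sim b_1$. These are obstinate because the four diamond edges $v_1a_1,v_1b_1,u_1a_1,u_1b_1$ together with the chords $v_2v_3$ and $v_3v_4$ block every fork centered at $v_1$, $v_3$, $a_1$, or $u_1$ (each natural choice of paths runs into one of these edges). To dispose of them I would first record the extra structure available from the earlier lemmas: by Lemma~\ref{lem:no_neighborhood_containment} the vertex $u_1$ has a neighbor $w\notin N(v_1)$; by Lemma~\ref{lem:no-degree2-both-sides} applied to $a_1$ (which has two neighbors $v_1,v_3$ on $C$) we get $N(a_1)\cap N^2(C)=\{u_1\}$; and by Lemma~\ref{lem:no-partial-diamond} at $v_1$ and at $v_3$ each $u'\in N^2(C)$ is adjacent to all or none of $N(v_1)\setminus V(C)$, respectively $N(v_3)\setminus V(C)$, so with Lemma~\ref{lem:no-2-converge-1} no $u'\in N^2(C)\setminus\{u_1\}$ is adjacent to $a_1$ or $b_1$ (otherwise $u_1a_1v_1$ and $u'b_1v_1$ would be independent paths of length $2$ from $v_1$). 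In the case $b_1\sim v_4$ the vertices $v_1a_1v_3v_4b_1$ form an induced $5$-cycle $C_2$ with $u_1\sim a_1,b_1\in V(C_2)$, and the same good-cycle comparison applied to $C_2$ manufactures a vertex adjacent to $v_2$ or $v_5$ but remote from $C_2$, from which a fork can again be produced; the case $y\sim b_1$ is handled symmetrically by a fork centered at $b_1$ or $y$ built from $w$. Finally, whenever the intended fork center turns out to have degree $3$, I would replace the fork by a recoloring argument on the minimum counterexample $G$ minus that vertex, exactly as in the degree-$4$ claim inside the proof of Lemma~\ref{lem:u1-u2}.
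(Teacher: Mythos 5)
Your ``generic case'' is correct and is in fact a clean packaging of a step the paper also performs: the counting argument comparing $|N^{\geq 2}(C)|$ with $|N^{\geq 2}(C_1)|$ for $C_1=v_1a_1v_3v_4v_5v_1$ (using that $u_1\in N(C_1)\setminus(V(C)\cup N(C))$) does produce a vertex $y\sim v_2$ with $y\nsim V(C_1)$, and the fork $(v_1, v_2y, v_5v_4, b_1, a_1)$ is valid when $b_1\nsim v_4$ and $y\nsim b_1$. The paper uses exactly this fork and exactly this counting trick inside its Claim~2 (there the vertex is called $b_2\in N(v_2)\setminus N(C')$, and the fork forces $b_2\sim a_1$ in the paper's labelling). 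The problem is that this disposes of only one branch; everything you label ``degenerate'' is the actual content of the lemma, and your sketches for those branches do not go through. For the case $b_1\sim v_4$ (the paper's Claim~1, after swapping the names of $a_1$ and $b_1$), the good-cycle comparison with $C_2=v_1a_1v_3v_4b_1v_1$ does yield a remote vertex $y\sim v_2$ (or $v_5$), but no fork ``can again be produced'' in any direct way: the fork at $v_1$ through $v_5v_4$ is blocked by the edge $b_1v_4$, every fork routed through $u_1$ is blocked because $u_1$ is adjacent to both $a_1$ and $b_1$, and forks centered at $v_3$, $v_4$, $b_1$ or $y$ require degree-$4$ centers that are not available. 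The paper needs a full page here, analyzing where a second vertex $u_2\in N^2(C)$ attaches, invoking $3$-connectivity to rule out small cuts, and constructing explicit $3$-colorings.

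The case ``every admissible $y$ is adjacent to $b_1$'' is similarly unresolved. There $N(b_1)$ may well be exactly $\{v_1,u_1,y\}$, so no fork can be centered at $b_1$; the chord $yv_2$ kills the forks centered at $v_1$ with second path $v_1b_1y$; and $y\nsim u_1$ (triangle), so $y$ cannot be reached from $u_1$ either. Your fallback --- ``replace the fork by a recoloring argument on $G$ minus that vertex'' --- is precisely where the paper spends its Claim~2 and the two-case analysis of $3$-colorings of $G\setminus\{v_2\}$, including delicate recolorings that depend on which colors appear on $N(v_1)$, $N(v_3)$, $N(v_4)$ and on further structural claims (e.g.\ $d(v_3)=3$, the location of $N^2(v_j)\cap N^2(C)$, and several applications of Lemmas~\ref{lem:no-partial-diamond}, \ref{lem:no-degree2-both-sides} and \ref{lem:no-2-converge-1}). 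Nothing in your proposal indicates how these colorings would be chosen or why they would extend, so the proof has a genuine gap: the two deferred configurations are not routine residue but constitute essentially the entire difficulty of the statement.
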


\begin{proof} Let $C=v_1v_2v_3v_4v_5v_1$ and without loss of generality assume that $v=v_1$. Then $\{a_1,b_1\}\nsim \{v_2,v_5\}$ as $G$ is triangle-free. Suppose the assertion is false and assume, without loss of generality, that $b_1\sim v_3$. Note that there exists some $u_2 \in N^2(C) \setminus \{u_1\}$, otherwise $N^2(C) = \{u_1\}$ and $N^3(C) \neq \emptyset$ by Lemma \ref{lem:2nd-neighborhood-independent}, contradicting that $G$ is $3$-connected (by Lemma \ref{lem:no-2-cut}) as $u_1$ is a cut vertex.

\begin{claim}\label{a_1-not-sim-v_4}
$a_1 \nsim v_4$. 
\end{claim}

\begin{proof}
For the sake of contradiction suppose that $a_1\sim v_4$. Then,  by Lemma \ref{lem:no-2-converge-1}, $u_2 \notin N^2(\{v_1, v_3, v_4\})$; so without loss of generality, there exists $c_2\in N(u_2)\cap N(v_2)$. 

Suppose $N^2(C)\cap N^2(v_5)=\emptyset$. By Lemma \ref{lem:no-partial-diamond}, $N(v_2)\setminus V(C) \subseteq N(u_2)$; so by Lemma \ref{lem:no-2-converge-1}, $d(v_2)=3$ or $N^2(v_2) \cap N^2(C)=\{u_2\}$. 
If $d(v_2) = 3$, then $\{c_2, u_1\}$ is a $2$-cut in $G$, contradicting that $G$ is $3$-connected. Hence $d(v_2) \geq 4$ and $N^2(v_2) \cap N^2(C)=\{u_2\}$. Then $N^3(C) = \emptyset$, as otherwise $\{u_1, u_2\}$ is a $2$-cut.
By Lemma \ref{lem:2nd-neighborhood-independent}, we then have that $u_2 \sim u_1$. But now, we obtain a $3$-coloring of $G$ by coloring $N(v_2) \cup \{u_1\}$ with $1$, $N(u_1) \cup \{v_2, v_5\}$ with $2$, and $N(v_5) \setminus \{v_1\}$ with $3$, a contradiction.

Therefore, let $u_5 \in N^2(C)\cap N^2(v_5)$ and $c_5\in N(u_5)\cap N(v_5)$. If $c_5 = c_2$ then the Structure Algorithm applied to $C \cup \{a_1, b_1, u_1, c_2, u_2\}$ obtains a contradiction, so $c_5 \neq c_2$. Consider the cycle $C' = v_1b_1v_3v_4a_1v_1$. By the choice of $C$ there must exist $x \in N(\{v_2, v_5\})$ such that $x \nsim C'$. Up to symmetry, we may assume $x \sim v_5$. Then by Lemma \ref{lem:no-partial-diamond}, $x \sim N^2(C)$; so we may further assume $x = c_5$. Now, the Structure Algorithm applied to $C \cup \{a_1, b_1, u_1, c_2, c_5, u_2\}$ obtains a contradiction.
\end{proof}
 
Consider the 5-cycle $C' = v_1b_1v_3v_4v_5v_1$. By the choice of $C$, there must exist $a_2 \in N(v_2) \setminus N(C')$. If $d(v_2)=3$, then the Mixed Algorithm applied to $C \cup \{a_1, b_1, u_1\}$ obtains a contradiction, so $d(v_2)\geq 4$ and there exists another vertex $b_2\in N(v_2)$. Furthermore, $b_2 \sim \{v_4, v_5\}$, or else the Mixed Algorithm applied to $C \cup \{a_1, b_1, u_1, a_2, b_2\}$ obtains a contradiction. Let $i, j\in \{4, 5\}$ such that $b_2 \sim v_i$ and  $b_2 \nsim v_j$. Let $a_i \in N(v_i)$ and $a_j \in N(v_j)$.

We now show that $N^2(v_2) \cap N^2(C) \ne \emptyset$.  For, suppose $N^2(v_2) \cap N^2(C) = \emptyset$. Then, by Lemma \ref{lem:no-partial-diamond}, $N^2(v_i) \cap N^2(C) = \emptyset$ and hence $N^2(C)\subseteq N^2(v_j)\cup \{u_1\}$.
 Thus $N^3(C)=\emptyset$ and $|N^2(v_j)\cap N^2(C)|\leq 1$,
otherwise by Lemma \ref{lem:no-partial-diamond}, either $\{a_j,u_1\}$ or $(N^2(v_j)\cap N^2(C))\cup \{u_1\}$ is a cut set of size at most $2$.
By Lemma \ref{lem:2nd-neighborhood-independent}, $N^2(v_j)\cap N^2(C) =\{u_j\}$ for some $u_j\sim u_1$.
Note that similar to $b_2$, every vertex in $(N(v_2)\backslash V(C))\backslash \{a_2\}$ is adjacent to $v_i$. Furthermore, for every $a_i'\in N(v_i)\backslash V(C)$, we have $a_2\not\sim a_i'$, since otherwise either $(v_2, v_1 v_5, a_2 a_4', v_3, b_2)$ (when $i=4$) or
$(v_2, v_3 v_4, a_2 a_5', v_1, b_2)$ (when $i=5$) is a fork in $G$, giving a contradiction. Hence $N(v_2)\backslash V(C)\not\sim N(v_i)\backslash V(C)$.
Now, we obtain a $3$-coloring of $G$ by coloring $N(v_2) \cup (N(v_i)\setminus V(C)) \cup \{u_1\}$ with $1$, $((N(v_1) \cup N(v_3))\setminus V(C)) \cup \{v_2, v_j, u_j\}$ with $2$, and $(N(v_j)\setminus V(C)) \cup \{v_i\}$ with $3$, a contradiction.

Let $u_2 \in N^2(v_2) \cap N^2(C)$. Suppose there exists some vertex $u_1'\notin N(C)\cup \{u_2\}$ with $u_1'\sim u_1$. Then, by the Mixed Algorithm applied to $C \cup \{a_1, b_1, u_1, a_2, b_2, u_2, u_1'\}$, there exists some new vertex $w$ such that $u_1' \sim w$. Furthermore, if $w \in N^2(C)$, then the Structure Algorithm further applied to $C \cup \{a_1, b_1, u_1, a_2, b_2, u_2, u_1',w\}$ will return a contradiction. Therefore $w \in N(C)$, and hence $u_1' \in N^2(C)$. This implies that $u_1 \nsim N^3(C)$ (since every neighbor of $u_1$ in $N^{\geq 2}(C)$ is in $N^2(C)$ by the above argument). On the other hand, if there doesn't exist any $u_1'\notin N(C)\cup \{u_2\}$ with $u_1'\sim u_1$, then we also have $u_1\not\sim N^3(C)$.

Now note that $|N^2(C) \cap N^2(v_j) \setminus \{u_1, u_2\}| \leq 1$, otherwise the Mixed Algorithm applied to $C \cup \left(N^2(v_j) \cap N^2(C)\right) \cup \{a_1, b_1, u_1, a_2, b_2, u_2, a_j\}$ gives a contradiction (in either case for $j=4$ or $j=5$). It follows that $N^3(C)=\emptyset$, or else $(N^2(C) \cap N^2(v_j) \setminus \{u_1, u_2\}) \cup \{u_2\}$ is a cut set of size at most $2$. 

Now, if $N(v_j) \sim \{u_1, u_2\}$, $u_1 \sim u_2$ by Lemma \ref{lem:2nd-neighborhood-independent}, so we obtain a 3-coloring of $G$ by coloring $N(u_1) \cup \{v_2, v_i\}$ with 1, $N(v_2) \cup \{v_1, v_3\}$ with 2, and $\{v_j\}$ with 3; otherwise, we obtain a 3-coloring of $G$ by coloring $N(u_1) \cup \{v_2, v_i\} \cup (N^2(v_j) \cap N^2(C))$ with 1, $(N(u_2)\setminus N(u_1)) \cup \{u_1, v_j\}$ with 2, and $(N(v_j) \cap N(C)) \cup \{v_1, v_3, u_2\}$ with 3, a contradiction. This completes the proof of Lemma \ref{lem:diamond-2-attachment}.
\end{proof}

\begin{lemma}\label{lem:Bi_not_sim_2nd_nbhd}
Let $C$ be a good 5-cycle and let  
$u,v\in V(C)$ with $uv\notin E(C)$. Then $N(u)\cap N(v)\nsim N^2(C)$.
\end{lemma}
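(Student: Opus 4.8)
The plan is to fix $C=v_1v_2v_3v_4v_5v_1$ and, since $uv\notin E(C)$, take $u=v_1$ and $v=v_3$. Suppose for contradiction that some $w\in N(v_1)\cap N(v_3)$ has a neighbour $z\in N^2(C)$. As $G$ is triangle-free we have $N(w)\cap V(C)=\{v_1,v_3\}$, so $|N(w)\cap V(C)|=2$, and Lemma~\ref{lem:no-degree2-both-sides} forces $|N(w)\cap N^2(C)|\le 1$; thus $z$ is the \emph{unique} neighbour of $w$ in $N^2(C)$, a fact I would use repeatedly.

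The first real step is to pin down the local degrees. Applying Lemma~\ref{lem:no-partial-diamond} to the pair $(z,v_1)$ with $w$ and any other $b\in N(v_1)\setminus V(C)$ shows that $z$ is complete to $N(v_1)\setminus V(C)$, and symmetrically to $N(v_3)\setminus V(C)$. If $v_1$ had a neighbour $w'\in N(v_1)\setminus(V(C)\cup\{w\})$, then $w,w'$ would be two distinct common neighbours of $z$ and $v_1$, so Lemma~\ref{lem:diamond-2-attachment} (applied to $z$ and $v_1$) would give $\{w,w'\}\nsim V(C)\setminus\{v_1\}$, contradicting $w\sim v_3$. Hence $N(v_1)=\{v_2,v_5,w\}$ and, symmetrically, $N(v_3)=\{v_2,v_4,w\}$, so $d(v_1)=d(v_3)=3$. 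This rigidity is exactly what blocks the naive route: the only length-$2$ path from $v_1$ (or $v_3$) into $N^2(C)$ ends at $z$, so Lemma~\ref{lem:no-2-converge-1} cannot be invoked directly.

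Next I would bring in the goodness of $C$ through the competing $5$-cycle $C''=v_1wv_3v_4v_5$, which is induced because $G$ is triangle-free. Passing from $C$ to $C''$ pulls $z$ out of $N^{\ge 2}$ (since $z\sim w\in C''$), while the only vertices that can newly enter $N^{\ge 2}(C'')$ are those $t\in N(v_2)\setminus V(C)$ with $t\nsim v_4,v_5,w$ (such $t$ automatically satisfy $t\nsim v_1,v_3$ by triangle-freeness). A direct count gives $|C''\cup N(C'')|=|C\cup N(C)|+1-|T|$, where $T$ is this set; since $C$ minimises $|N^{\ge 2}|$, we must have $T\neq\emptyset$, and I fix such a vertex $t$.

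The main obstacle is the endgame: deriving a contradiction from the combined configuration — $v_1,v_3$ of degree $3$, the edge $wz$ with $z\in N^2(C)$, and the attachment $t\sim v_2$. I do not expect a single forbidden fork, because the chords $wv_1$, $wv_3$ and the cycle edge $v_4v_5$ systematically ruin the natural forks centred at $v_2$, $w$, or $v_1$, so a case analysis in the style of the preceding lemmas seems unavoidable. Two ingredients should drive it. The first is a colouring argument: given a $3$-colouring $\sigma$ of $G-w$, the degree-$3$ vertices $v_1,v_3$ can be recoloured to a common colour (allowing $w$ to be coloured) \emph{unless} $\sigma(v_2),\sigma(v_4),\sigma(v_5)$ are pairwise distinct, where $v_4\sim v_5$ is used; this pins the colouring down to one configuration to be refuted by a fork. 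The second is an independent-path analysis on $C''$ via Lemma~\ref{lem:no-2-converge-1}, using the path $wzz'$ (with $z'\in N(z)\setminus N(w)$, which exists by Lemma~\ref{lem:no_neighborhood_containment}) together with the attachment of $t$ at $v_2$, and separating the cases $d(w)=3$ and $d(w)\ge 4$. Controlling the adjacencies of $t$ and of $z'$ to $v_5$ and to the remaining neighbours of $w$ is where the bulk of the work lies.
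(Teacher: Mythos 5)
Your opening is correct and coincides with the paper's: Lemmas \ref{lem:no-partial-diamond} and \ref{lem:diamond-2-attachment} do force $d(v_1)=d(v_3)=3$ exactly as you argue, and your count on the competing cycle $v_1wv_3v_4v_5v_1$ producing a vertex $t\in N(v_2)$ with no neighbour on it is the same use of goodness that the paper makes (the paper shows slightly more, namely that exactly one such vertex exists, so that $v_2$ plays for the new cycle the role $w$ plays for $C$ and a genuine symmetry becomes available). The gap is the endgame: everything after ``I fix such a vertex $t$'' is a plan rather than a proof. You explicitly defer the contradiction to ``a case analysis in the style of the preceding lemmas,'' and neither of your two ``ingredients'' is carried out. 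The colouring claim about $G-w$ is not even correct as stated, since recolouring $v_1$ and $v_3$ to a common colour only frees a colour for $w$ if the rest of $N(w)$ (which may be large) also misses one; and the independent-path analysis on $C''$ is never performed. Since you yourself locate ``the bulk of the work'' in the part you have not done, the lemma is not established.

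For what it is worth, the paper's closing idea is different from either ingredient you propose. Writing $a_1$ for your $w$ and $u_1$ for your $z$, it takes a $3$-colouring $\sigma$ of $G-v_3$ normalised so that $\sigma(a_1)=1$, $\sigma(v_2)=2$, $\sigma(v_4)=3$ (hence $\sigma(v_1)=3$), and uses the $a_1\leftrightarrow v_2$ symmetry coming from the goodness of $v_1a_1v_3v_4v_5v_1$ to assume $\sigma(v_5)=1$. The recolouring obstructions then force vertices $b_2,c_2\in N(v_2)$ with $b_2\sim v_4$ and $c_2\sim v_5$; feeding these back into Lemmas \ref{lem:no-partial-diamond} and \ref{lem:diamond-2-attachment} yields $N^2(C)\cap N^2(\{v_2,v_4,v_5\})=\emptyset$, whence $N^2(C)\subseteq N(a_1)$ and $\{a_1\}$ is a cut vertex, contradicting Lemma \ref{lem:no-2-cut}. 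So the missing idea is a connectivity punchline, not a final fork.
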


\begin{proof} Let $C=v_1v_2v_3v_4v_5v_1$. 
Suppose the assertion is false and assume without loss of generality that $a_1 \in N(v_1) \cap N(v_3)$ such that $a_1 \sim u_1$ for some $u_1 \in N^2(C)$. By Lemmas \ref{lem:no-partial-diamond} and \ref{lem:diamond-2-attachment}, $d(v_1) = d(v_3) =3$. Let $\sigma$ be a $3$-coloring $\sigma$ of $G \setminus \{v_3\}$, which exists by the minimality of $G$. Then $|\sigma(N(v_3))|=3$  or else we would be able to extend $\sigma$ to a $3$-coloring of $G$. Without loss of generality, let $\sigma(a_1) = 1$,  $\sigma(v_2) = 2$, and $\sigma(v_4) = 3$. Then  $\sigma(v_1) = 3$. 

We claim that $C':=v_1a_1v_3v_4v_5v_1$ is also a good 5-cycle. For, otherwise, there exist distinct $b,c\in N(v_2)\setminus N(C')$. By Lemma \ref{lem:no_neighborhood_containment} and the symmetry between $b$ and $c$, we may let $b'\in N(b)\setminus (N(c)\cup \{a_1\})$. Then $b'\nsim v_5$ or $b'\nsim v_4$ to avoid the triangle $b'v_4v_5$. Now $(v_2,bb',v_1v_5,v_3,c)$ (when $b'\nsim v_5$) or $(v_2,bb',v_3v_4,v_1,c)$ (when $b'\nsim v_4$) is a fork in $G$, a contradiction. 

Also note that by the choice of $C$, there
 exists a vertex $v_2' \in N(v_2) \setminus N(C')$. Hence, there is a symmetry between $a_1$ and $v_2$. Thus, we may assume without loss of generality that $\sigma(v_5)=1$ (if $\sigma(v_5) = 2$, then we can swap $v_2$ with $a_1$, swap $C$ with $C'$, and swap the colors $1$ and $2$). 
Then there exist $b_2 \in N(v_2) \cap \sigma^{-1}(1)$ and $b_2' \in N(b_2) \cap \sigma^{-1}(3)$. For, otherwise, we can extend $\sigma$ to a $3$-coloring of $G$ by recoloring $N(v_2)$ with $3$ (if necessary), recoloring $v_2$ with $1$, and coloring $v_3$ with $2$. 
     
  Similarly, there exists  $c_2 \in (N(v_2)\setminus V(C)) \cap \sigma^{-1}(3)$ and $c_2' \in N(c_2) \cap \sigma^{-1}(1)$. Otherwise, we can extend $\sigma$ to a $3$-coloring of $G$ by recoloring $v_1$ with $2$, recoloring $N(v_2)\setminus V(C)$ with $1$ (if necessary), recoloring $v_2$ with $3$, and coloring $v_3$ with $2$. 
     
 Observe that $b_2 \sim v_4$; for otherwise, $b_2'\ne v_4$ and $(v_2, b_2 b_2', v_3 v_4, v_1, c_2)$ would be a fork in $G$. Also, $c_2 \sim v_5$, or else $c_2'\ne v_5$ and $(v_2,v_1v_5,c_2c_2',b_2,v_3)$ would be a fork in $G$. Now by Lemmas
 \ref{lem:no-partial-diamond} and \ref{lem:diamond-2-attachment}, $N^2(C) \cap N^2(\{v_2, v_4, v_5\}) = \emptyset$. It follows that $N^2(C) \subseteq N(a_1)$. Thus, $\{a_1\}$ is a $1$-cut in $G$, contradicting Lemma \ref{lem:no-2-cut}.
\end{proof}

\setcounter{claim}{0}

\begin{lemma}\label{lem:one_neighbor_in_C}
Let $C$ be a good $5$-cycle in $G$ and let $u,v\in V(C)$ with $u\nsim v$. Then $|N(u)\cap N(v)| = 1$, i.e., there exists no vertex in $N(C)$ which is adjacent to both $u$ and $v$.
\end{lemma}

\begin{proof}
Let $C= v_1 v_2 v_3 v_4 v_5 v_1$ and for convenience $v_{j}\equiv v_{j\textrm{ mod }5}$ for every $j\in \mathbb{Z}$. Suppose for a contradiction and without loss of generality that there exists  $a_1 \in N(v_1) \cap N(v_{3})\setminus \{v_2\}$. By Lemma \ref{lem:Bi_not_sim_2nd_nbhd}, $a_1\nsim N^2(C)$.

\begin{claim}\label{cl:no_other_2_neighbors}
$N(v_i) \cap N(v_{i+2}) = \{v_{i+1}\}$ for all $i \neq 1$.
\end{claim}

\begin{proof}

Suppose there exists $i\ne 1$ and $b \in (N(v_i) \cap N(v_{i+2}))\backslash \{v_{i+1}\}$. Then $i \notin \{2, 5\}$. Otherwise, suppose by symmetry that $i=2$. Then $N^2(C) \subseteq N^2(v_5)$ by Lemma \ref{lem:Bi_not_sim_2nd_nbhd}. Hence, $N^3(C) = \emptyset$ or else $N(v_5)\setminus V(C)$ is a $1$-cut (if $d(v_5) = 3$) or $N^2(v_5) \cap N^2(C)$ is a $1$-cut (if $d(v_5) >3$), contradicting Lemma \ref{lem:no-2-cut}. But now, $N^2(C)$ must be an independent set, contradicting Lemma \ref{lem:2nd-neighborhood-independent}. 

Thus $i\in \{3,4\}$ and, by symmetry, we may assume $i=4$. Now, $N^2(C) \subseteq N^2(v_2) \cup N^2(v_5)$ by Lemma \ref{lem:Bi_not_sim_2nd_nbhd}. By Lemma \ref{lem:2nd-neighborhood-independent}, there must exist $u_2 \in N^2(C) \cap N^2(v_2)$ and $u_5 \in N^2(C) \cap N^2(v_5)$ such that $u_2 \sim u_5$. By Lemmas \ref{lem:diamond-2-attachment} and \ref{lem:no-2-converge-1},  $|N(v_2)\setminus V(C)| \leq 1$ or $|N^2(C) \cap N^2(v_2)| \leq 1$; so let $A=N(v_2)\setminus V(C)$ or $A=N^2(C)\cap N^2(v_2)$ such that $|A|\le 1$.  Similarly, let $B=N(v_5)\setminus V(C)$ or  $B=N^2(C) \cap N^2(v_5)$ such that $|B|\le 1$.  
Now, $N^3(C) = \emptyset$ and $N^2(C) = \{u_2, u_5\}$, or else $A \cup B$ is a $2$-cut in $G$, contradicting Lemma \ref{lem:no-2-cut}.

Let $a_2 \in N(v_2) \cap N(u_2)$ and $a_5 \in N(v_5) \cap N(u_5)$. Now $a_2 \nsim b$ or else the Mixed Algorithm applied to $C \cup \{b, a_1, a_2, u_2, a_5, u_5\}$ obtains a contradiction. By symmetry, $a_5 \nsim a_1$. But now, the Mixed Algorithm applied to $C \cup \{b, a_1, a_2, u_2, a_5, u_5\}$ obtains a contradiction.
\end{proof}

Note that $N(v_1) \nsim N(v_3)$; otherwise by Claim \ref{cl:no_other_2_neighbors}, for $b_1 \in N(v_1)$ and $b_3 \in N(v_3)$ with $b_1 \sim b_3$, $(v_1, b_1b_3, v_5v_4, v_2, a_1)$ would be a fork in $G$. 

\begin{claim}\label{cl:3-matching}
There exist three disjoint edges $u_ia_i$ for $i=2,4,5$ such that $a_i\in N(v_i)\setminus V(C)$ and $u_i\in N^2(C)$. 
\end{claim}
\begin{proof}
By Claim \ref{cl:no_other_2_neighbors} there exist distinct $a_i \in N(v_i)\setminus V(C)$ for $i \in \{2, 4, 5\}$. 
Moreover, note that by Lemma \ref{lem:no-partial-diamond} and Lemma \ref{lem:Bi_not_sim_2nd_nbhd}, $N^2(C) \subseteq N(\{a_2, a_4, a_5\})$.

Suppose there does not exist a matching of three edges from $\{a_2, a_4, a_5\}$ to $N^2(C)$. 
Then by Hall's theorem, either there exist distinct $i,j\in \{2,4,5\}$ such that $N^2(C) \cap N(a_i) = N^2(C)\cap N(a_j)=\{u\}$ for some $u\in N^2(C)$, or there exists some $i\in \{2,4,5\}$ such that $N^2(C) \cap N(a_i) = \emptyset$, or $|N(\{a_2, a_4, a_5\})\cap N^2(C)| = 2$ and $\{a_2, a_4, a_5\}$ is complete to $N^2(C)$. We claim that it suffices to consider the first two cases. Indeed, in the last case, $|N^2(C)|\leq 2$ and thus $|N^3(C)| = \emptyset$ (otherwise $N^2(C)$ is a cut of size at most $2$). By Lemma \ref{lem:2nd-neighborhood-independent}, $N^2(C)= \{u_1, u_2\}$ with $u_1\sim u_2$. This gives a contradiction since $G$ is triangle-free. Hence one of the two former cases above must happen.

In the former case,  let $k\in \{2, 4, 5\} \setminus\{i,j\}$. Now $N^3(C) = \emptyset$, or else $\{u, a_k\}$  (if $d(v_k) = 3$), or $\{u\} \cup (N^2(C) \cap N^2(v_k))$ (if $d(v_k) > 3$) is a cut set in $G$ of size at most 2, contradicting Lemma \ref{lem:no-2-cut}. By Lemma \ref{lem:2nd-neighborhood-independent}, $N^2(C)$ is not an independent set; so there exists $u_k \in N^2(C)$ such that $u_k \sim a_k$ and $u_k \sim u$. Let $u_k' \in N(u_k) \setminus  \{a_k, u\}$. Note that, by Lemma \ref{lem:no-2-converge-1}, $u_k'\in N(C)$  and, hence,  $u_k' \sim v_k$ and $N^2(C) \cap N^2(v_k) = \{u_k\}$. We obtain a $3$-coloring of $G$ by coloring $N(u)$ with $1$, coloring $N(u_k)$ with $2$, $\left(\left(N(v_1) \cup N(v_3)\right) \setminus V(C)\right) \cup \{v_2\}$ with $3$, and coloring $v_3$, $v_4$, $v_5$, and $v_1$ greedily in that order. This gives a contradiction. 

Thus, there exists $i\in \{2,4,5\}$ such that $N^2(C) \cap N(a_i) = \emptyset$. Let $\{j,k\} = \{2, 4, 5\} \setminus \{i\}$. Then by Lemma \ref{lem:no-2-converge-1}, $N^2(C) \subseteq N^2(v_j) \cup N^2(v_k)$. By Lemmas \ref{lem:diamond-2-attachment} and \ref{lem:no-2-converge-1},  $|N(v_j)\setminus V(C)| \leq 1$ or $|N^2(C) \cap N^2(v_j)| \leq 1$; so let $A=N(v_j)\setminus V(C)$ or $A=N^2(C)\cap N^2(v_j)$ such that $|A|\le 1$.  Similarly, let $B=N(v_k)\setminus V(C)$ or  $B=N^2(C) \cap N^2(v_k)$ such that $|B|\le 1$.  
Now, $N^3(C) = \emptyset$ and $|N^2(C) \cap N^2(v_j)| = |N^2(C) \cap N^2(v_k)| = 1$; otherwise, $N^3(C) = \emptyset$ and $N^2(C)$ is independent, contradicting Lemma \ref{lem:2nd-neighborhood-independent}, or $A \cup B$ is a $2$-cut in $G$, contradicting Lemma \ref{lem:no-2-cut}. Let $N^2(C) \cap N^2(v_j) = \{u_j\}$ and $N^2(C) \cap N^2(v_k) = \{u_k\}$, and let $a_j\in N(u_j)\cap N(v_j)$ and $a_k\in N(u_k)\cap N(v_k)$. Note that $u_j \sim u_k$ by Lemma \ref{lem:2nd-neighborhood-independent}.

By symmetry, we may suppose that $j = 5$. Let $\sigma$ be a $3$-coloring of $G \setminus (N(v_i)\setminus V(C))$, such that $N(u_5) \cup \{v_1\}$ is colored with $1$, $N(u_k) \cup \{v_3, v_5\}$ is colored with $2$, $(N(v_1)\setminus V(C)) \cup N(v_3)$ are colored with $3$.   

Then there must exist $c \in N(v_i)\setminus V(C)$ such that $c\sim N(v_j)\setminus V(C)$ and $c\sim N(v_k)\setminus V(C)$; as otherwise, we would be able to extend $\sigma$ to a $3$-coloring of $G$ by greedily coloring each vertex in $N(v_i)\setminus V(C)$ with $1$ or $2$. Without loss of generality, assume $c \sim a_5$ and $c \sim a_k$. Let $b_k \in N(u_k) \setminus \{a_k, u_5\}$; then $b_k\sim v_k$.  Now, $c \sim b_k$ to avoid the fork $(v_4, a_4c, v_5v_1, b_k, v_3)$ (if $i = 2$) or the fork $(v_2, a_2c, v_1v_5, b_k, v_3)$ (if $i=4$). But then $(c, a_5u_5, v_iv_3, a_k, b_k)$ is a fork in $G$, a contradiction. 
\end{proof}

Note that since $d(a_1) \geq 3$ and $a_1 \nsim N^2(C)$ by Lemma \ref{lem:Bi_not_sim_2nd_nbhd}, $a_1$ must have another neighbor in $N(C)$; without loss of generality we may suppose $a_1 \sim a_4$ or $a_1 \sim a_2$. If $a_1 \sim a_4$, the Mixed Algorithm applied to $C \cup \{a_1, a_2, u_2, a_4, u_4, a_5, u_5\}$ obtains a contradiction, so $a_1 \nsim a_4$ and by symmetry $a_1 \nsim a_5$, and hence $a_1 \sim a_2$. But now, the Mixed Algorithm applied to $C \cup \{a_1, a_2, u_2, a_4, u_4, a_5, u_5\}$ obtains a contradiction.
\end{proof}

\setcounter{claim}{0}

\section{Proof of Theorem \ref{thm:main_theorem}}\label{sec:fork_main}

\begin{proof}[Proof of Theorem \ref{thm:main_theorem}]
Let $C = v_1 v_2 v_3 v_4 v_5 v_1$ be a good $5$-cycle and for convenience $v_{j}\equiv v_{j\textrm{ mod }5}$ for every $j\in \mathbb{Z}$. By Lemma \ref{lem:one_neighbor_in_C}, every vertex outside $C$ has at most one neighbor on $C$. 

\begin{claim}\label{cl:matching_between_nbhds_degree_four}
For $i\in [5]$, $a \in N(v_{i+1})\setminus V(C)$, and distinct $a_i,b_i\in N(v_i)\setminus V(C)$, we have $a \sim \{a_i, b_i\}$. 
\end{claim}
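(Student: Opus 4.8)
The plan is to argue by contradiction and exhibit an induced fork centered at $v_1$. By the cyclic symmetry of the labels $v_1,\dots,v_5$, it suffices to treat the case $i=1$: so let $a\in N(v_2)\setminus V(C)$ and let $a_1,b_1\in N(v_1)\setminus V(C)$ be distinct, and suppose toward a contradiction that $a\nsim a_1$ and $a\nsim b_1$. The goal is then to show that these three off-cycle vertices, together with four vertices of $C$, induce a fork.

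The key structural input I would use is Lemma~\ref{lem:one_neighbor_in_C}, which guarantees that every vertex outside $C$ has at most one neighbor on $C$; since $a\sim v_2$, $a_1\sim v_1$, and $b_1\sim v_1$, this identifies $v_2$ as the unique cycle-neighbor of $a$ and $v_1$ as the unique cycle-neighbor of each of $a_1,b_1$. Combined with triangle-freeness (which makes $C$ chordless, and forces $a_1\nsim b_1$ since both are adjacent to $v_1$), this guarantees that $v_1,v_2,v_4,v_5,a,a_1,b_1$ are seven distinct vertices, so they are candidates to induce a fork.

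The candidate fork is $(v_1, v_2 a, v_5 v_4, a_1, b_1)$, with $v_1$ as the degree-$4$ center: its six edges $v_1v_2,\ v_2a,\ v_1v_5,\ v_5v_4,\ v_1a_1,\ v_1b_1$ are all present by hypothesis and by the cycle $C$. The heart of the argument — and essentially the only step requiring care — is to verify that the induced subgraph on $\{v_1,v_2,a,v_5,v_4,a_1,b_1\}$ contains no further edge. This is a finite check: $v_1\nsim v_4$ and $v_2\nsim v_4,v_5$ because $C$ is chordless; $a\nsim v_1,v_4,v_5$, $a_1\nsim v_2,v_4,v_5$, and $b_1\nsim v_2,v_4,v_5$ because each of $a,a_1,b_1$ has a unique neighbor on $C$; $a_1\nsim b_1$ by triangle-freeness; and $a\nsim a_1$, $a\nsim b_1$ by the contradiction hypothesis. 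Since every pair not listed as a fork edge is non-adjacent, the seven vertices induce a fork, contradicting that $G$ is fork-free and completing the proof.

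I do not anticipate any genuine obstacle here: the difficulty is purely organizational, namely choosing the fork whose two pendant edges absorb the two distinct neighbors $a_1,b_1$ of $v_1$ and whose length-two arms reach out to $a$ (through $v_2$) and to $v_4$ (through $v_5$), and then confirming the non-edges. The role of the hypothesis that $a_i,b_i$ be \emph{distinct} is exactly to supply the two pendant leaves at the center; with only a single such neighbor of $v_1$ this fork could not be assembled.
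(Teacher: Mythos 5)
Your proof is correct and is essentially identical to the paper's: the paper exhibits the same fork $(v_i, v_{i-1}v_{i-2}, v_{i+1}a, a_i, b_i)$ (your $(v_1, v_2a, v_5v_4, a_1, b_1)$ after relabeling) and justifies the induced non-edges via Lemma~\ref{lem:one_neighbor_in_C} exactly as you do. Your write-up is just a more explicit spelling-out of the finite adjacency check that the paper leaves implicit.
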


\begin{proof}

For, otherwise, $(v_i, v_{i-1}v_{i-2}, v_{i+1} a, a_i, b_i)$ would be a fork in $G$, as by Lemma \ref{lem:one_neighbor_in_C}, $N(a)\cap V(C) = \{v_{i+1}\}$ and $N(a_i) \cap V(C) = N(b_i)\cap V(C) = \{v_i\}$.
\end{proof}

\begin{claim}\label{clm:distance_2_connection}
For any $i,j\in [5]$ with $j\not\in \{i-1, i, i+1\}$, if $d(v_i)\geq 4$ and $d(v_j)\geq 4$ then $N(v_i) \setminus V(C)\nsim N(v_j) \setminus V(C)$.
\end{claim}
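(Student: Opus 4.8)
The plan is to argue by contradiction. By the rotational and reflective symmetry of $C$ it suffices to treat $i=1,\,j=3$: assume $d(v_1)\ge 4$, $d(v_3)\ge 4$, and that there are $x\in N(v_1)\setminus V(C)$ and $y\in N(v_3)\setminus V(C)$ with $x\sim y$. Since $G$ is triangle-free, $C$ is induced, so $N(v_1)\cap V(C)=\{v_2,v_5\}$ and $N(v_3)\cap V(C)=\{v_2,v_4\}$; by Lemma~\ref{lem:one_neighbor_in_C} every vertex outside $C$ has a unique neighbor on $C$, so $N(x)\cap V(C)=\{v_1\}$ and $N(y)\cap V(C)=\{v_3\}$. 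As $d(v_1)\ge 4$ there is $a_1\in N(v_1)\setminus(V(C)\cup\{x\})$, and symmetrically $a_3\in N(v_3)\setminus(V(C)\cup\{y\})$.

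The first step is a reduction fork. For any $a_1\in N(v_1)\setminus(V(C)\cup\{x\})$ consider the seven vertices $v_1;\,x,y;\,v_5,v_4;\,v_2;\,a_1$. The edges $v_1x,xy,v_1v_5,v_5v_4,v_1v_2,v_1a_1$ are present, and every other pair is a non-edge: $xv_4,xv_5,xv_2,yv_4,yv_5,yv_2,v_2a_1,v_4a_1,v_5a_1$ fail because $x,y,a_1$ each have a unique neighbor on $C$ (namely $v_1,v_3,v_1$), $xa_1$ fails by triangle-freeness through $v_1$, and $v_2v_4,v_2v_5$ fail since $C$ is induced. Hence $(v_1,xy,v_5v_4,v_2,a_1)$ is an induced fork unless $a_1\sim y$; since this must fail for every such $a_1$, we obtain $N(v_1)\setminus V(C)\subseteq N(y)$, and symmetrically $N(v_3)\setminus V(C)\subseteq N(x)$.

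The second step sharpens this. Fix $a_1\in N(v_1)\setminus(V(C)\cup\{x\})$ and $a_3\in N(v_3)\setminus(V(C)\cup\{y\})$; then $a_1\sim y$ by the first step. For the vertices $v_3;\,y,a_1;\,v_4,v_5;\,v_2;\,a_3$ the edges $v_3y,ya_1,v_3v_4,v_4v_5,v_3v_2,v_3a_3$ are present, and the only pair among these seven whose (non-)adjacency is not already forced by the unique-$C$-neighbor property, triangle-freeness through $v_3$, and induced-ness of $C$ is $a_1a_3$. Thus $(v_3,ya_1,v_4v_5,v_2,a_3)$ is an induced fork unless $a_1\sim a_3$, so $N(v_1)\setminus(V(C)\cup\{x\})$ is complete to $N(v_3)\setminus(V(C)\cup\{y\})$. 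Combining the two steps, for any fixed such $a_1$ we get $\{v_1\}\cup(N(v_3)\setminus V(C))\subseteq N(a_1)\cap N(x)$.

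The final step, which I expect to be the main obstacle, is to turn this rigid local picture into a contradiction; the difficulty is that $N(v_1)\cup N(v_3)\cup\{x,y\}$ induces only $C_4$'s and is itself fork-free and $3$-colorable, so the contradiction must be genuinely global. The route I would attempt first is to prove $N(a_1)\subseteq N(x)$ and then invoke Lemma~\ref{lem:no_neighborhood_containment}: for $w\in N(a_1)\setminus\bigl(\{v_1\}\cup(N(v_3)\setminus V(C))\bigr)$, the configuration $(v_1,a_1w,v_5v_4,v_2,x)$ is an induced fork unless $w\sim\{v_2,v_4,v_5,x\}$, and since $w$ has at most one neighbor on $C$ it then remains only to exclude $w\sim v_2$, $w\sim v_4$, and $w\sim v_5$ with $w\nsim x$, which I would handle via Claim~\ref{cl:matching_between_nbhds_degree_four} and further forks; excluding all three yields $N(a_1)\subseteq N(x)$, contradicting Lemma~\ref{lem:no_neighborhood_containment}. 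Should such a $w$ resist exclusion, I would instead finish by a coloring argument: give the non-adjacent pair $v_1,v_3$ a common color, which forces $N(v_1)\setminus V(C)$, $N(v_3)\setminus V(C)$, and $x,y$ consistently into the other two colors by the containments and completeness above, and then extend to $N^{\ge 2}(C)$ using the good-cycle machinery (Lemma~\ref{lem:2nd-neighborhood-independent} and the diamond lemmas) together with the $3$-connectivity of Lemma~\ref{lem:no-2-cut}; producing a conflict-free extension is where the real work lies.
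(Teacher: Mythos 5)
Your first two steps are correct and reproduce the paper's opening move: the forks $(v_1,xy,v_5v_4,v_2,a_1)$ and $(v_3,ya_1,v_4v_5,v_2,a_3)$ do force $N(v_1)\setminus V(C)$ to be complete to $N(v_3)\setminus V(C)$, exactly as in the paper (which phrases it as: if $z\in N(v_3)$ lies in $N(x_1)\setminus N(y_1)$ for $x_1,y_1\in N(v_1)$, then $(v_1,x_1z,v_5v_4,v_2,y_1)$ is a fork). The gap is in your final step, which you yourself flag as unfinished: neither the $N(a_1)\subseteq N(x)$ route nor the coloring fallback is carried out, and as written the proof does not terminate. Your diagnosis that ``the contradiction must be genuinely global'' is also off the mark.

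The missing idea is short and purely local, it just requires one vertex from outside $N(v_1)\cup N(v_3)$. Since $G$ is $3$-connected (Lemma~\ref{lem:no-2-cut}), $v_2$ has a neighbor $a_2\notin V(C)$. Apply Claim~\ref{cl:matching_between_nbhds_degree_four} with $i=1$ (using $d(v_1)\ge 4$) to get $a_2\sim a_1$ for some $a_1\in N(v_1)\setminus V(C)$, and with $i=3$ (using $d(v_3)\ge 4$ and the reflected form of that claim, $a_2\in N(v_{i-1})$) to get $a_2\sim a_3$ for some $a_3\in N(v_3)\setminus V(C)$. The completeness you already established gives $a_1\sim a_3$, so $a_1a_2a_3a_1$ is a triangle, a contradiction. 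This is precisely how the paper closes the argument. Incidentally, your case analysis for $w\sim v_2$ in the attempted final step is within reach of this same observation (such a $w$ plays the role of $a_2$ and already yields the triangle), but the cases $w\sim v_4$ and $w\sim v_5$ that you defer are genuinely harder than the problem requires and should be bypassed entirely.
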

\begin{proof}
Without loss of generality suppose $d(v_1)\geq 4$ and $d(v_3)\geq 4$. 
Note that, for any $i \in \{1,3\}$ and $x_i, y_i\in N(v_i)$, if there exists $z \in N(v_{4-i})$ such that $z \in N(x_i)\backslash N(y_i)$, then $(v_1, x_1 z, v_5v_4, v_2, y_1)$ (when $i=1$) or $(v_3, x_3 z, v_4 v_5, v_2, y_3)$ (when $i=3$) is a fork in $G$, a contradiction. 
Thus, if $N(v_1)\setminus V(C)\sim N(v_3)\setminus V(C)$, then $N(v_1)\setminus V(C)$ is complete to $N(v_3)\setminus V(C)$. By Claim \ref{cl:matching_between_nbhds_degree_four}, let $a_i \in N(v_i)\backslash V(C)$ for $i\in [3]$ such that $a_2\sim a_1$ and $a_2\sim a_3$. Now $a_2 a_1 a_3 a_2$ is a triangle in $G$, a contradiction.
\end{proof}

\begin{claim}\label{clm:distance_2_connection_strong} Suppose no degree 3 vertices of $G$ are consecutive on $C$. 
For any $i,j\in [5]$ with $j\not\in \{i-1, i, i+1\}$, if $d(v_i)\geq 4$, then $N(v_i) \setminus V(C)\nsim N(v_j) \setminus V(C)$.
\end{claim}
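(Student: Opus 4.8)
The plan is to reduce to a single configuration and then manufacture a triangle from two applications of Claim~\ref{cl:matching_between_nbhds_degree_four}. By the dihedral symmetry of $C$ I may assume $i=1$ and $j=3$, and I suppose for contradiction that $N(v_1)\setminus V(C)\sim N(v_3)\setminus V(C)$. If $d(v_3)\ge 4$, then $v_1,v_3$ are two non-consecutive vertices of degree at least $4$, so Claim~\ref{clm:distance_2_connection} already yields a contradiction. Hence $d(v_3)=3$, so $N(v_3)\setminus V(C)$ consists of a single vertex $a_3$ (and $N(v_3)=\{v_2,v_4,a_3\}$). This is exactly where the extra hypothesis enters: since $v_2$ is consecutive to the degree-$3$ vertex $v_3$, the assumption that no two degree-$3$ vertices are consecutive forces $d(v_2)\ge 4$.

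Next I would promote the single assumed edge to completeness, exactly as in the first step of Claim~\ref{clm:distance_2_connection}. Pick $x_1\in N(v_1)\setminus V(C)$ with $x_1\sim a_3$. If some $y_1\in N(v_1)\setminus V(C)$ had $y_1\nsim a_3$, then $(v_1, x_1 a_3, v_5 v_4, v_2, y_1)$ would be a fork: all the required non-adjacencies are routine from Lemma~\ref{lem:one_neighbor_in_C} (each of $x_1,y_1,a_3$ has a unique neighbour on $C$, namely $v_1,v_1,v_3$ respectively, and $x_1\nsim y_1$ since both lie in $N(v_1)$ and $G$ is triangle-free). Thus every vertex of $N(v_1)\setminus V(C)$ is adjacent to $a_3$; in particular, fixing distinct $a_1,b_1\in N(v_1)\setminus V(C)$ (which exist as $d(v_1)\ge 4$), both $a_1\sim a_3$ and $b_1\sim a_3$.

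Finally I would exploit that $v_2$ has degree at least $4$ by applying Claim~\ref{cl:matching_between_nbhds_degree_four} at $v_2$ from both sides. Fix distinct $a_2,b_2\in N(v_2)\setminus V(C)$. Applying the claim with $i=2$ to $a_3\in N(v_3)\setminus V(C)$ gives $a_3\sim a_2$ or $a_3\sim b_2$; say $a_3\sim a_2$. Applying it with $i=1$ to $a_2\in N(v_2)\setminus V(C)$ gives $a_2\sim a_1$ or $a_2\sim b_1$. Either way $a_2$ is adjacent to a vertex of $\{a_1,b_1\}$, which by completeness is adjacent to $a_3$; together with $a_2\sim a_3$ this is a triangle (the three vertices are genuinely distinct, again by Lemma~\ref{lem:one_neighbor_in_C}), contradicting that $G$ is triangle-free.

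The conceptual point, and the only real obstacle, is recognising that although $v_3$ has degree $3$ (so Claim~\ref{cl:matching_between_nbhds_degree_four} cannot be centred ``at'' $v_3$, which would require two private neighbours there), one can still center both applications at the guaranteed high-degree vertex $v_2$: one application pulls $a_3$ into $N(v_2)$, the other pulls $N(v_2)$ into $N(v_1)$, and the completeness step closes the triangle. The no-consecutive-degree-$3$ hypothesis is used in exactly one place, to guarantee $d(v_2)\ge 4$; everything else is a routine fork/triangle verification.
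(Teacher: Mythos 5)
Your proposal is correct and follows essentially the same route as the paper: reduce to $d(v_3)=3$ via Claim~\ref{clm:distance_2_connection}, use the fork $(v_1, x_1 a_3, v_5 v_4, v_2, y_1)$ to show $a_3$ is complete to $N(v_1)\setminus V(C)$, invoke the no-consecutive-degree-$3$ hypothesis to get $d(v_2)\ge 4$, and close a triangle $a_1a_2a_3$ with two applications of Claim~\ref{cl:matching_between_nbhds_degree_four} centred at $v_2$ and $v_1$. Your write-up is in fact slightly more careful than the paper's about why the two ``without loss of generality'' label choices are compatible (completeness of $a_3$ to $N(v_1)\setminus V(C)$ makes either choice of $a_1$ versus $b_1$ work).
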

\begin{proof}
Without loss of generality, suppose $d(v_1)\geq 4$ and $N(v_1)\backslash V(C) \sim N(v_3)\backslash V(C)$. If $d(v_3)\geq 4$, then we are done by Claim \ref{clm:distance_2_connection}. Hence $d(v_3) = 3$. Let $a_3\in N(v_3)\setminus V(C)$ and $a_1, b_1\in N(v_1)\setminus V(C)$. Note that $a_3$ is complete to $N(v_1)\setminus V(C)$; for, otherwise, for any $x_1, y_1 \in N(v_1) \setminus V(C)$ such that $a_3 \in N(x_1)\backslash N(y_1)$, $(v_1, x_1 a_3, v_5 v_4, v_2, y_1)$ is a fork in $G$, a contradiction. Since $d(v_2)\geq 4$, let $a_2, b_2\in N(v_2) \setminus V(C)$. By Claim \ref{cl:matching_between_nbhds_degree_four} and without loss of generality assume that $a_3\sim a_2$ and $a_2\sim a_1$. Then $a_1 a_2 a_3 a_1$ is a triangle, a contradiction.
\end{proof}

We divide the proof into cases based on the number of degree $3$ vertices of $G$ that are contained in $V(C)$. 

\medskip 
{\it Case} 1. Every vertex on $C$ has degree $3$ in $G$. 

For $i\in [5]$, let $a_i\in N(v_i)\backslash V(C)$.
By the choice of $G$, $G-C$ admits a $3$-coloring, say $\sigma$.
We claim that for any choice of $\sigma$, $|\sigma(N(C))|=1$. For, otherwise, we may assume without loss of generality that  
$\sigma(a_1) \neq \sigma(a_{2})$. Then, we can modify $\sigma$ to a $3$-coloring of $G$ by coloring $v_1$ with $\sigma(a_{2})$ and coloring $v_5$, $v_4$, $v_3$, and $v_2$ greedily in that order, 
a contradiction.   

Without loss of generality we may assume that $\sigma(a_i) = 1$ for all $i \in [5]$; so $N(C)$ is independent.
For $i,j\in [5]$ with $i\neq j$, let $G_{ij}:= (G-C) + a_i a_j$. 

We claim that $G_{ij}$ is fork-free for all $i,j\in [5]$. Indeed, let $F$ be a fork in $G_{ij}$; so $a_ia_j\in E(F)$. Observe that neither $a_i$ nor $a_j$ can be a leaf in $F$; otherwise, suppose $a_i$ is a leaf in $F$, then $(F - a_i) + a_jv_j$ is a fork in $G$, a contradiction. Hence without loss of generality we can write $F = (a_i, a_ja_j', bb', c, d)$. Note that $\{a_j', b, c, d\} \cap N_G(C) = \emptyset$ since $N_G(C)$ is independent in $G$. Moreover, by Lemma \ref{lem:one_neighbor_in_C}, $b'\notin N(v_{i+1})\cap N(v_{i-1})$.  Then $(a_i, v_iv_{i+1}, bb', c, d)$ (when $b'\nsim v_{i+1}$) or $(a_i, v_iv_{i-1}, bb', c, d)$ (when $b'\nsim v_{i-1}$) is a fork in $G$, a contradiction. 

We now claim that for all $i,j\in [5]$, $a_i a_j$ is contained in a triangle in $G_{ij}$. For, otherwise,  $G_{ij}$ is triangle-free for some $i,j\in [5]$. Then by the minimality of $G$, $G_{ij}$ (and hence $G-C$) admits a $3$-coloring $\tau$ such that $\left|\tau\left(N(C)\right)\right| \geq 2$, a contradiction. 

Hence let $u_{ij} \in N(a_i) \cap N(a_j)$ for all $i,j \in [5]$.
We claim that there exists $u \in \{u_{ij}: i,j\in [5]\} \subseteq N^2(C)$ such that 
$|N(u)\cap N(C)|\ge 3$. Otherwise, $|N(u_{ij}) \cap \{a_k: k\in [5]\}| = 2$ for all $i,j \in [5]$; so $(a_1, u_{12}a_2, u_{13}a_3, u_{14}, u_{15})$ is a fork in $ G$, a contradiction. 

Thus let $\{a_i,a_j, a_k\}\subseteq N(u)$, where $i,j,k\in [5]$ are distinct. By symmetry between $i$ and $j$, assume that $k\not\in \{i-1, i+1\}$. Note that $d(u) = 3$, as for any $w \in N(u) \backslash \{a_i, a_j, a_k\}$, $(u, a_iv_i, a_kv_k, a_j, w)$ would be a fork in $G$. 

Now, there must exist $b \in (N_{G-C}(a_i)\backslash\{u\})\cap \sigma^{-1}(2)$, or else we can recolor $u$ with $3$ (if necessary), recolor $a_i$ with $2$, and obtain a $3$-coloring $\tau$ of $G-C$ with $\left|\tau\left(N(C)\right)\right| \geq 2$, a contradiction. Similarly, there  exist $c \in (N_{G-C}(a_i) \backslash \{u\})\cap \sigma^{-1}(3)$ and $c' \in N_{G-C}(c)\cap \sigma^{-1}(2)$; or else we can recolor $N_{G-C}(a_i)$ with $2$ (if necessary), recolor $a_i$ with $3$, and obtain a $3$-coloring $\tau$ of $G-C$ with $\left|\tau\left(N(C)\right)\right| \geq 2$, a contradiction. Now $(a_i, cc', v_iv_{i+1}, b, u)$ is a fork in $G$, a contradiction. 

\medskip

{\it Case} 2. Two consecutive vertices on $C$ have degree $3$ in $G$. 

By Case 1, we may assume that some vertex on $C$ has degree at least $4$ in $G$. So without loss of generality, we may assume that $d(v_4) = d(v_5) = 3$ and $d(v_1) \geq 4$. Let $a_1,b_1\in N(v_1)\setminus V(C)$ be distinct, and let $a_i\in N(v_i)\setminus V(C)$ for $i\in \{4,5\}$. 

By Claim \ref{cl:matching_between_nbhds_degree_four}, $a_5 \sim \{a_1, b_1\}$ and by symmetry assume $a_5 \sim a_1$. By the choice of $G$, $G-v_5$ admits a $3$-coloring, say $\sigma$. Then $|\sigma(N(v_5))|=3$ or else $\sigma$ can be extended to a $3$-coloring of $G$. Thus, we may assume $\sigma(v_1) = 1$, $\sigma(a_5) = 2$, and $\sigma(v_4) = 3$. This also implies that $\sigma(a_1) = 3$. 

We claim that $(N(v_1) \backslash \{a_1\})\cap \sigma^{-1}(3)=\emptyset$ for every choice of $\sigma$ with $\sigma(v_1) = 1$, $\sigma(a_5) = 2$, and $\sigma(v_4) = 3$. For, let $a \in (N(v_1) \backslash \{a_1\})\cap \sigma^{-1}(3)$. Then, there exist $b \in N(v_1)\cap \sigma^{-1}(2)$ and $b' \in N(b)\cap \sigma^{-1}(3)$. Otherwise, we could modify $\sigma$ to a $3$-coloring of $G$ by recoloring $N(v_1)\backslash \{v_5\}$ with $3$ (if necessary), recoloring $v_1$ with $2$, and coloring $v_5$ with $1$. Now $(v_1, bb', v_5v_4, a, a_1)$ is a fork in $G$, a contradiction.
    
  So $\sigma(b_1)=\sigma(v_2) = 2$ and, thus, $\sigma(v_3) = 1$. But now, the Coloring Algorithm with input $C \cup \{a_1, a_2, a_3, a_4, a_5, b_1\}$ and $\sigma$ obtains a contradiction.

\medskip

{\it Case} 3. Exactly two vertices on $C$ have degree $3$ in $G$. 

By the previous two cases, we may assume that $d(v_2) =d(v_5) = 3$ and that $d(v_i) \ge 4$ for all $i\in \{1,3,4\}$; let $a_i,b_i\in N(v_i)\setminus V(C)$ be distinct for all $i\in \{1,3,4\}$. Moreover, let $a_2\in N(v_2)\setminus V(C)$ and $a_5\in N(v_5)\setminus V(C)$. By Claim \ref{cl:matching_between_nbhds_degree_four}, we may suppose up to symmetry that $a_2 \sim b_3$, $a_5 \sim b_4$, and $a_5 \sim b_1$. Now, $a_3 \sim b_4$ or $a_3 \nsim b_4$, but in either case the Mixed Algorithm applied to $C \cup \{a_1, a_2, a_3, a_4, a_5, b_1, b_3, b_4\}$ obtains a contradiction.

\medskip

{\it Case} 4. At most one vertex on $C$ has degree 3 in $G$.
Without loss of generality, let   $a_i,b_i\in N(v_i)\setminus V(C)$ be distinct for $2\leq i\leq 5$, and let $a_1\in N(v_1)\setminus V(C)$. 
By Claim \ref{cl:matching_between_nbhds_degree_four} and Lemma \ref{lem:no_neighborhood_containment}, we may choose $a_i, b_i$ such that $a_i \sim a_{i+1}$ and $b_i\sim b_{i+1}$ for $2\leq i\leq 4$.

By Claim \ref{cl:matching_between_nbhds_degree_four}, $a_1 \sim \{a_2, b_2\}$ and $a_1 \sim \{a_5, b_5\}$. Without loss of generality assume that $a_1 \sim a_2$. 
Let $U_i:=N^2(v_i)\cap N^2(C)$ for $i\in [5]$. By Lemmas \ref{lem:no-partial-diamond} and \ref{lem:no-2-converge-1}, $|U_i| \leq 1$ for $2\leq i \leq 5$. Since $G$ is triangle-free, $U_i\cap U_{i+1} = \emptyset$ for $i\in [5]$. By Claim \ref{clm:distance_2_connection_strong}, $N(v_i)\setminus V(C)\nsim N(v_j)\setminus V(C)$ for $i,j\in [5]$ and $j\not\in \{i-1, i, i+1\}$. Note that $|U_1|\leq 1$ as well; for, otherwise, letting $u_1, u_1'\in U_1$ we have that $(a_1, a_2 v_2, a_5 v_5, u_1, u_1')$ (when $a_1\sim a_5$) or $(a_1, a_2 v_2, b_5 v_5, u_1, u_1')$ (when $a_1\sim b_5$) is a fork in $G$, a contradiction. By Lemma \ref{lem:no-partial-diamond}, $U_i$ is complete to $N(v_i)\backslash V(C)$ for $i\in [5]$.

We claim that $N^3(C) = \emptyset$. For, suppose there exists  $w \in N^3(C) \cap N(u_i)$, where $u_i\in U_i$ for some $i\in [5]$. 
If $a_1\sim a_5$ then $u_i\sim a_{i+2}$ to avoid the fork $(a_i, a_{i+1} a_{i+2}, u_i w, v_i, a_{i-1})$ and $u_i\sim a_{i-2}$ to avoid the fork $(a_i, a_{i-1} a_{i-2}, u_i w, v_i, a_{i+1})$, forcing the triangle $u_i a_{i+2} a_{i-2} u_i$ in $G$, a contradiction. Hence $a_1\nsim a_5$ and $a_1\sim b_5$. Similarly, we also obtain that $u_i\sim a_{i-2}$ and $u_i\sim a_{i+2}$ (by replacing some of the $a_j$'s by $b_j$'s in the forks and applying Lemma \ref{lem:no-partial-diamond}), giving a contradiction. Hence $N^3(C) = \emptyset$.

We now define a coloring $\sigma$ of $G$ as follows: color $((N(v_2) \cup N(v_5)) \setminus V(C)) \cup U_1 \cup \{v_1\}$ with 1, $N(v_3) \cup U_2 \cup U_4$ with 2, and $N(v_4) \cup (N(v_1)\backslash V(C)) \cup U_3 \cup U_5$ with 3 (if $U_i\cap U_j \neq \emptyset$, use the color of $U_{\min\{i,j\}}$).

 For each $i\in [5]$, let $u_i$ be the unique vertex in $U_i$ if $U_i\neq \emptyset$. Recall $U_i\cap U_{i+1}= \emptyset$ for $i\in [5]$. 
 But now $\sigma$ is a proper $3$-coloring of $G$ unless, up to symmetry (since $a_1 \sim \{a_5, b_5\}$ by Claim \ref{cl:matching_between_nbhds_degree_four}), $U_3\sim U_5$ and $u_3, u_5$ are both colored $3$ (note that $u_3 \nsim a_1$, otherwise $u_3$ would have been colored $1$). Moreover, $u_5 \nsim a_2$, otherwise $u_5$ would have been colored $2$. But now $(a_3, a_2 a_1, u_3 u_5, v_3, a_4)$ is a fork in $G$, a contradiction.
\end{proof}


\begin{thebibliography}{}
    
    \bibitem{Alon} N. Alon, A. Moitra, B. Sudakov. Nearly complete graphs decomposable into large induced matchings and their applications. \textit{J. European Mathematical Society}. \textbf{15}, (2013) 1575-1596.
        
    \bibitem{Beineke1968} L.W. Beineke, Derived graphs and digraphs. Beitr\'age zur Graphentheorie (Teubner, Leipzig 1968) 17-33.    
        
    \bibitem{crst} M. Chudnovsky, N. Robertson, P. Seymour, R. Thomas. The strong perfect graph theorem. \textit{Annals of Mathematics}. \textbf{164} (2006), 51-229.         
        
    \bibitem{CSS2019}
    M. Chudnovsky, A. Scott and P. Seymour, Induced subgraphs of graphs with large chromatic number. XII. Distant stars, \textit{J. Graph Theory} \textbf{92} (2019), 237--254.    
        
    \bibitem{De47} B. Descartes, A three colour problem, Eureka 21 (April 1947).  
    
    \bibitem{De54} B. Descartes,``Solution to Advanced Problem no. 4526", {\it Amer. Math. Monthly} {\bf 61} (1954), 352.     
        
    \bibitem{erdos} P. Erd\H{o}s. Graph Theory and Probability. \textit{Canad. J. Math}. \textbf{11} (1959),34-38.
    
    \bibitem{GST1980} A. Gy\'arf\'as, E. Szemer\'edi and Zs. Tuza, Induced subtrees in graphs of large chromatic number, \textit{Discrete Math.} \textbf{30} (1980), 235--344.
    
    \bibitem{Kierstead1984} H. A. Kierstead, On the chromatic index of multigraphs without large triangles. \textit{J. Comb. Theory, Ser. B} \textbf{36} (1984) 156-160.
    
    \bibitem{Kierstead-Penrice1994} H. A. Kierstead and S. G. Penrice, Radius two trees specify $\chi$-bounded classes. \textit{J. Graph
    Theory} \textbf{18} (1994) 119–129.
    
    \bibitem{Kierstead-Zhu2004} H. A. Kierstead and Y. Zhu, Radius three trees in graphs with large chromatic number.
    \textit{SIAM J. Disc. Math.} \textbf{17} (2004) 571–581.
    
    \bibitem{Mycielski1955} J. Mycielski. Sur le coloriage des graphes. \textit{Colloq. Math.} \textbf{3} (1955) 161–162.
    
    \bibitem{yeyu} G. Fan, B. Xu, T. Ye, X. Yu. Forbidden Subgraphs and 3-Colorings. \textit{Siam J. Discrete Math}, \text{28} (2014) 1226-1256.
        
    \bibitem{Gyarfas1975} A. Gyarfas, On Ramsey covering-numbers, Infinite and finite sets (Colloq., Keszthely, 1973; dedicated to P. Erdős on his 60th birthday), \textit{Vol. II, Colloq. Math. Soc. János Bolyai}, \textbf{10}, Amsterdam: North-Holland, (1975) 801-816 
    
    \bibitem{Kang} R.J. Kang, M. Mnich, and T. M\"uller. Induced matchings in subcubic planar graphs, \textit{SIAM J. Discrete Math}, \textbf{26} (2012) 1383-1411

    \bibitem{Randerath} B. Randerath, The Vizing Bound for the Chromatic Number Based on Forbidden Pairs, Ph.D.thesis, RWTH Aachen University, Aachen, Germany, 1998.
    
    \bibitem{Randerath_the_second} B. Randerath and I. Schiermeyer. Vertex colouring and forbidden subgraphs -- a survey. \textit{Graphs and Combinatories} \textbf{20} (2004) 1–40.
    
     \bibitem{Schiermeyer-Randerath2019} I. Schiermeyer and B. Randerath, Polynomial $\chi$-binding functions and forbidden induced subgraphs: A survey, \textit{Graphs Combin.} \textbf{35} (2019), 1--31.
    
    \bibitem{Scott1997} A. Scott, Induced trees in graphs of large chromatic number, \textit{J. Graph Theory} \textbf{24} (1997), 297--311.
    
    \bibitem{Scott-Seymour2020} A. Scott and P. Seymour, A survey of  $\chi$‐boundedness, \textit{J. Graph Theory} \textbf{95(3)} (2020), 473--504.

    \bibitem{Sumner} D. P. Sumner, Subtrees of a graph and the chromatic number, \textit{The theory and applications of graphs (Kalamazoo, Mich., 1980)}, Wiley, New York, (1981) 557-576.
    

\end{thebibliography}
\end{document}